\newtheorem{theorem}{Theorem}[section]
\newtheorem{definition}[theorem]{Definition}
\newtheorem{assumption}[theorem]{Assumption}
\newtheorem{lemma}[theorem]{Lemma}
\newtheorem{remark}[theorem]{Remark}
\newtheorem{corollary}[theorem]{Corollary}
\newcommand{\esssup}{\mathop{\rm ess\,sup}}
\begin{document}
	%
	% paper title
	% Titles are generally capitalized except for words such as a, an, and, as,
	% at, but, by, for, in, nor, of, on, or, the, to and up, which are usually
	% not capitalized unless they are the first or last word of the title.
	% Linebreaks \\ can be used within to get better formatting as desired.
	% Do not put math or special symbols in the title.
	\title{An LMI Approach to Stability Analysis of Coupled Parabolic Systems \thanks{
%This technical note was partially presented at the 21st international
%symposium on mathematical theory of networks and systems,
%July 7-11, 2014, Netherlands.
}
	}
	%
	%
	% author names and IEEE memberships
	% note positions of commas and nonbreaking spaces ( ~ ) LaTeX will not break
	% a structure at a ~ so this keeps an author's name from being broken across
	% two lines.
	% use \thanks{} to gain access to the first footnote area
	% a separate \thanks must be used for each paragraph as LaTeX2e's \thanks
	% was not built to handle multiple paragraphs
	%
	
	%\author{Michael~Shell,~\IEEEmembership{Member,~IEEE,}
	%	John~Doe,~\IEEEmembership{Fellow,~OSA,}
	%	and~Jane~Doe,~\IEEEmembership{Life~Fellow,~IEEE}% <-this % stops a space
	%	\thanks{M. Shell is with the Department
	%		of Electrical and Computer Engineering, Georgia Institute of Technology, Atlanta,
	%		GA, 30332 USA e-mail: (see http://www.michaelshell.org/contact.html).}% <-this % stops a space
	%	\thanks{J. Doe and J. Doe are with Anonymous University.}% <-this % stops a space
	%	\thanks{Manuscript received April 19, 2005; revised September 17, 2014.}}
	
	\author{Masashi~Wakaiki,~\IEEEmembership{Member,~IEEE}% <-this % stops a space
		\thanks{
			M.~Wakaiki is with the 
			Graduate School of System Informatics, Kobe University, Hyogo 657-8501, Japan.
			(email:{\tt  
				wakaiki@ruby.kobe-u.ac.jp).}}% <-this % stops a space
		\thanks{
			This work was supported by JSPS KAKENHI Grant Numbers
			JP17K14699.}% <-this % stops a space
		%\thanks{Manuscript received April 19, 2005; revised September 17, 2014.}
	}

	\maketitle
	
	% As a general rule, do not put math, special symbols or citations
	% in the abstract or keywords.
	\begin{abstract}
We analyze the exponential stability
of a class of distributed parameter systems.
The system we consider is described by a coupled
parabolic partial differential equation with spatially varying coefficients.
We approximate the coefficients
by splitting space domains but
take into account approximation errors during stability analysis. 
Using a quadratic Lyapunov function, we obtain sufficient conditions for
exponential stability in terms of linear matrix inequalities.
	\end{abstract}
	
	% Note that keywords are not normally used for peerreview papers.
	\begin{IEEEkeywords}
	Partial 
differential equations, 
exponential stability, 
Lyapunov functional,
linear matrix inequalities.
	\end{IEEEkeywords}
	
	% For peer review papers, you can put extra information on the cover
	% page as needed:
	% \ifCLASSOPTIONpeerreview
	% \begin{center} \bfseries EDICS Category: 3-BBND \end{center}
	% \fi
	%
	% For peerreview papers, this IEEEtran command inserts a page break and
	% creates the second title. It will be ignored for other modes.
	\IEEEpeerreviewmaketitle

	%%%%%%%%%%%%%%%%%%%%%%%%%%%%%%%%%%%%%%%%%%%%%%%%%%%%%%%%%%%%%%%%%%%%%%%%%%%%%%%%
\section{Introduction}
Consider the following parabolic partial differential equation (PDE) on 
a bounded open set
$\Omega \subset \mathbb{R}^m$:
\begin{equation}
\label{eq:coupled_PDE}
\begin{cases}
\partial_t z = A\Delta z+ B(x)z
& \text{in $\Omega \times (0,T]$} \\
z = 0 & \text{on $\partial \Omega \times (0,T]$} \\
z(\cdot,0) = z^0 & \text{in $\Omega$},
\end{cases}
\end{equation}
where $A \in \mathbb{R}^{n \times n}$,
$B \in L^{\infty}(\Omega)^{n \times n}$, 
$\Delta$ is the Laplacian acting componentwise,
$z = [z_1,\dots,z_n]^{\top}: 
\overline \Omega \times [0,T] \to \mathbb{R}^n$ is the state, 
and
$z^0: \Omega \to \mathbb{R}^n$ is a given initial data in $L^2(\Omega)^n$. 
This PDE is a subclass of abstract parabolic equations (see, e.g.,  Sect.~11.1 of \cite{Renardy1993}),
and we call the PDE in \eqref{eq:coupled_PDE} a {\em coupled parabolic system}.
In this paper,
we study the stability analysis of this coupled parabolic system \eqref{eq:coupled_PDE}
by using linear matrix inequalities (LMIs) and a quadratic Lyapunov function
\begin{equation}
\label{eq:Lyap_intro}
V(z) := \int_{\Omega} z(x)^{\top} P(x) z(x) dx \qquad \forall z \in L^2(\Omega)^n.
\end{equation}

Lyapunov-based stability analysis without approximation has
recently been developed for distributed parameter systems.
The authors 
of \cite{Valmorbida2015CDC, Valmorbida2016, Gahlawat2017, Gahlawat2017CDC} have proposed semi-definite programming
approaches for the stability of 1-D and 2-D PDEs with polynomial data. 
LMI-based exponential stability conditions have been obtained for
various classes of distributed parameter systems, for example, 
1-D heat/wave equations with time-varying delays in \cite{Fridman2009PDE},
1-D semilinear parabolic systems in \cite{Fridman2012},
coupled n-D semilinear diffusion equations with time-varying delays
(and spatially constant coefficients) in \cite{Solomon2015}, and
n-D wave equations in \cite{Fridman2016}. Moreover,
in \cite{Diagne2012}, a sufficient dissipative boundary condition
has been derived to guarantee the 
exponential stability of coupled 1-D hyperbolic systems.
In terms of the coupled parabolic system \eqref{eq:coupled_PDE},
its controllability
have been extensively investigated, e.g., in \cite{Khodja2011, Cara2015}.
However,
relatively little work has been done on the stability analysis of this 
class of distributed parameter systems.
The difficulties in the stability analysis of the parabolic system \eqref{eq:coupled_PDE}
are the following three points: 
(i) The state $z$ is vector-valued.
(ii) The coefficient function $B$ may not be constant or even polynomial.
(iii) The set $\Omega$ is multi-dimensional.

To the parabolic PDE \eqref{eq:coupled_PDE},
we apply the gridding methods that have been
proposed for establishing the stability of networked control systems with
aperiodic sampling and time-varying delays, e.g.,  in \cite{Fujioka2009,Oishi2010, Donkers2011,Donkers2012_stochastic, Hetel2011}.
First we consider a general bounded  open set $\Omega$ and
transform the coefficient function $B(x)$ to a piecewise constant function plus
an approximation error by splitting the set $\Omega$.
This approximation error is taken
into account during the stability analysis.
We obtain an LMI-based sufficient condition for exponential stability, 
using a Lyapunov function in \eqref{eq:Lyap_intro} where
$P(x)$ is a constant function.
Second, we focus on the case where the set $\Omega$ is a polytope.
In this case, we approximate $B(x)$ by a piecewise linear function and
use a Lyapunov function in \eqref{eq:Lyap_intro} where
$P(x)$ is piecewise linear on $\Omega$.
This means that we use a wider class of Lyapunov functions
to analyze the stability of the coupled parabolic 
system \eqref{eq:coupled_PDE}.
As a result,
we can obtain a less conservative sufficient LMI condition
for exponential stability in the case of a polytope $\Omega$.

%Therefore, the proposed gridding method is not indirect.
%The point of the proposed approach is to transform
%the coefficient function $B(x)$ to a piecewise constant function plus
%an approximation error by gridding the set $\Omega$,
%and this error is took into account during the stability analysis.
%Therefore, the proposed gridding method is not indirect.
%Using the same grids,
%we restrict the function $P$ in the Lyapunov function
%to a piecewise constant function 
%and  obtain an LMI-based sufficient condition for exponential stability.

This paper is organized as follows.
In Section~II, we recall preliminary results on Sobolev spaces and
the concept of weak solutions of
the parabolic PDE \eqref{eq:coupled_PDE}.  In Section~III,
we analyze the exponential stability of the coupled PDE \eqref{eq:coupled_PDE}
with general set $\Omega$, by using Lyapunov functions with constant $P$.
In Section IV,  stability analysis by Lyapunov functions 
with piecewise linear $P$
is presented
in the case where $\Omega$ is a polytope.
We illustrate numerical examples in Section~V.

\paragraph*{Notation}
For a set $\Omega \subset \mathbb{R}^m$, its closure, interior, and boundary are denoted by 
$\overline{\Omega}$, $\Omega^i$, and $\partial \Omega$, respectively.
Let us denote the Euclidean norm of a vector $\xi \in \mathbb{R}^m$ by $\|\xi\|$.
For a matrix $M \in \mathbb{R}^{n \times p}$, 
we denote by $M^{\top}$ and $\|M\|$ its transpose and Euclidean-induced norm, respectively.
Let us denote by $\{e_\ell\}_{\ell=1}^m$ the standard basis in 
$\mathbb{R}^m$, namely, 
$e_1 = \begin{bmatrix}
1 &  0 & \cdots & 0
\end{bmatrix}^{\top}$, \dots,
$e_m = \begin{bmatrix}
0 & \cdots & 0 & 1
\end{bmatrix}^{\top}$.
For a square matrix $P \in \mathbb{R}^{n \times n}$,
the notation $P \succ 0$ means that $P $ is symmetric and
positive definite.
The Kronecker
product of two real matrices $A$ and $B$ is denoted by $A \otimes B$.
For simplicity, 
we write a partitioned real symmetric matrix 
\[
\begin{bmatrix}
A & B \\ B^\top & C
\end{bmatrix}
\text{~~as~~}
\begin{bmatrix}
A & B \\ \star & C
\end{bmatrix}.
\]

Let $\Omega \subset \mathbb{R}^m$ be an open set.
% and let $n \in \mathbb{N}$.
We denote by
$L^2(\Omega)^n$
the space of all measureable functions $f:\Omega \to \mathbb{R}^n$ satisfying $\int_\Omega \|f(x)\|^2 dx < \infty$.
The norm and inner product of $L^2(\Omega)^n$ are  defined by
%$\|f\|_{L^2(\Omega)^n} := \sqrt{\int_\Omega \|f(x)\|^2 dx}$ and
%$(f,g)_{L^2(\Omega)} := \int_\Omega f(x)^{\top}g(x) dx$, respectively.
\[
\|f\|_{L^2(\Omega)^n} := \sqrt{\int_\Omega \|f(x)\|^2 dx},~~
(f,g)_{L^2(\Omega)} := \int_\Omega f(x)^{\top}g(x) dx,
\]
respectively.
The space
$L^{\infty}(\Omega)^{n \times p}$ 
consists of all measureable functions $F:\Omega \to \mathbb{R}^{n \times p}$ satisfying 
$\esssup_{x \in \Omega} \|F(x)\| < \infty$.
The norm of $L^{\infty}(\Omega)^{n \times p}$ is defined by
\[\|F\|_{L^\infty(\Omega)^{n \times p}} := \esssup_{x \in \Omega} \|F(x)\|.
\]
We write $L^{\infty}(\Omega)^{n}$ for $L^{\infty}(\Omega)^{n \times 1}$, and
if $n=1$, then we will drop the superscript $n$.
Let us denote by $H^1(\Omega)^n$ the space of all functions 
$f = 
\begin{bmatrix}
	f_1 &  f_2 & \cdots & f_n
\end{bmatrix}^{\top} \in
L^2(\Omega)^n$ 
such that 
the first-order partial derivatives of $f_1,\dots,f_n$ exist in the weak sense 
and belong to $L^2(\Omega)$. We denote the gradient of 
a scalar-valued function
$f \in H^1(\Omega)$ by $\nabla f$ and define in the vector-valued case,
\[
\nabla f
:= 
\begin{bmatrix}
\nabla f_1\\
\vdots \\
\nabla f_n
\end{bmatrix}
\qquad
\text{for~}f = \begin{bmatrix} f_1 \\ \vdots \\ f_n \end{bmatrix} \in H^1(\Omega)^n.
\]
The norm of  $H^1(\Omega)^n$ is defined by 
\[\|f\|_{H^1(\Omega)^n} := \sqrt{\|f\|_{L^2(\Omega)^n}^2 + \|\nabla f\|_{L^2(\Omega)^{mn}}^2}.
\]
The space $C^\infty_0(\Omega)^n$ comprises all infinitely many times differentiable functions $f:\Omega \to \mathbb{R}^n$
such that $\text{supp}~\!f := \overline{\{x \in \Omega: f(x) \not=0\}}$ is compactly contained in $\Omega$.
The space $H^1_0(\Omega)^n$ is the closure of $C^{\infty}_0(\Omega)^n$ in $H^1(\Omega)^n$.
We denote by $H^{-1}(\Omega)^n$ the dual space of $H^1_0(\Omega)^n$, that is,
the space of bounded linear maps $g:H^1_0(\Omega)^n \to \mathbb{R}$.
Elements of $H^{-1}(\Omega)^n$ can be regarded as $n$-dimensional vectors whose 
entries belong to $H^{-1}(\Omega)$.
The duality pairing between $H^{1}_0(\Omega)^n$ and its dual
$ H^{-1}(\Omega)^n$ is denoted by
$
\langle
g,f
\rangle : H^{-1}(\Omega)^n \times H^1_0(\Omega)^n \to \mathbb{R}.
$
For simplicity of notation,
we will 
drop the dimension $n$ and the set $\Omega$ in the norms and the inner product, e.g., write $\|f\|_{H^1}$
for $\|f\|_{H^1(\Omega)^n}$. 

Let $X$ be a Banach space with norm $\|\cdot\|_X$. We denote by $L^2(0,T;X)$ the space of all
(strongly) measurable functions $f:(0,T) \to X$ such that $\int_0^T \|f\|_X^2 dt < \infty$. 
The space $C\big([0,T];X\big)$ comprises all continuous functions $f:[0,T] \to X$.

\section{Preliminaries}
%Let $\Omega \subset \mathbb{R}^m$ be a bounded, open set with
%%$\partial \Omega$ smooth.
%$C^2$-boundary.
%Consider the following coupled parabolic system:
%\begin{equation}
%\label{eq:coupled_PDE}
%\begin{cases}
%\partial_t z = A\Delta z+ B(x)z
%& \text{in $\Omega \times (0,T)$} \\
%z = 0 & \text{on $\partial \Omega \times (0,T)$} \\
%z(\cdot,0) = z^0 & \text{in $\Omega$},
%\end{cases}
%\end{equation}
%where $z^0$ is the initial data and $z = [z_1,\dots,z_n]^{\top}$ is the state.
%In \eqref{eq:coupled_PDE}, 
%$A \in \mathbb{R}^{n \times n}$ 
%is a constant matrix satisfying
%\begin{equation}
%\label{eq:A_assump}
%\xi^{\top} A \xi \geq \alpha \|\xi\|^2 \qquad \forall \xi \in \mathbb{R}^n
%\end{equation}
%for some $\alpha > 0$, and 
%$B(x) \in L^{\infty}(\Omega)^{n \times n}$.

%$B(x) \in L^{\infty}(\overline{\Omega}, \mathbb{R}^{n \times n})$ is a smooth 
%matrix-valued function.

In what follows, we write $z(t) = z(\cdot, t)$ from the vector-valued viewpoint.
The following theorem will be useful to study the
solution of the coupled parabolic PDE \eqref{eq:coupled_PDE}:
\begin{theorem}[Sec.~5.9.2 in \cite{Evans1998}]
	\label{thm:z_continuity}
	Let
	\[z \in L^2\big(0,T;H^1_0(\Omega)^n\big)
	\text{~and~~}
	\frac{dz}{dt}\in L^2\big(0,T;H^{-1}(\Omega)^n\big).
	\] 
	Then \\ \noindent
	(i) $z \in C\big([0,T];L^2(\Omega)^n\big)$;
	\\ \noindent
	(ii) The mapping $t \mapsto \|z(t)\|_{L^2}^2$ is absolutely continuous with
	\[
	\frac{d}{dt} \|z(t)\|^2_{L^2} = 2 
	\left \langle 
	\frac{dz}{dt}(t) , z(t)
	\right\rangle\qquad \text{a.e.~} t \in [0,T].
	\] 
\end{theorem}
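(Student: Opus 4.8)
The plan is to work within the Gelfand triple $H^1_0(\Omega)^n \hookrightarrow L^2(\Omega)^n \hookrightarrow H^{-1}(\Omega)^n$, in which $L^2(\Omega)^n$ is the pivot space, so that for $f \in L^2(\Omega)^n$ and $g \in H^1_0(\Omega)^n$ the duality pairing reduces to the inner product, $\langle f, g\rangle = (f,g)_{L^2}$. The central idea is to establish the claimed identity first for functions that are smooth in the time variable, where the ordinary product rule applies, and then to transfer it to the general case by a mollification-in-time argument.

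First I would mollify $z$ in the time variable. After a suitable extension of $z$ to a neighborhood of $[0,T]$, set $z^\varepsilon := \eta_\varepsilon * z$, where $\eta_\varepsilon$ is a standard mollifier on $\mathbb{R}$. Each $z^\varepsilon$ is smooth in $t$ with values in $H^1_0(\Omega)^n$, its time derivative satisfies $dz^\varepsilon/dt = \eta_\varepsilon * (dz/dt)$, and classical mollifier estimates give the strong convergences
\[
z^\varepsilon \to z \ \text{in } L^2\big(0,T;H^1_0(\Omega)^n\big), \qquad \frac{dz^\varepsilon}{dt} \to \frac{dz}{dt} \ \text{in } L^2\big(0,T;H^{-1}(\Omega)^n\big)
\]
as $\varepsilon \to 0$. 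For these smooth approximants, the product rule together with the Gelfand-triple identification yields, for all $s,t \in [0,T]$,
\[
\|z^\varepsilon(t)\|_{L^2}^2 - \|z^\varepsilon(s)\|_{L^2}^2 = 2\int_s^t \left\langle \frac{dz^\varepsilon}{d\tau}(\tau), z^\varepsilon(\tau)\right\rangle d\tau.
\]
Passing to the limit $\varepsilon \to 0$ and noting that the integrand on the right lies in $L^1(0,T)$ by the Cauchy--Schwarz inequality combined with the two $L^2$-in-time bounds, I obtain, for a.e.\ $s,t$, the integral identity
\[
\|z(t)\|_{L^2}^2 - \|z(s)\|_{L^2}^2 = 2\int_s^t \left\langle \frac{dz}{d\tau}(\tau), z(\tau)\right\rangle d\tau.
\]
Since the right-hand side is the integral of an $L^1$ function, $t \mapsto \|z(t)\|_{L^2}^2$ coincides a.e.\ with an absolutely continuous function whose derivative is $2\langle dz/dt, z\rangle$, which establishes (ii).

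For the continuity claim (i), I would upgrade the convergence of the approximants from $L^2$-in-time to uniform-in-time. Applying the smooth identity to the difference $z^\varepsilon - z^{\varepsilon'}$ and invoking the already-established strong convergences bounds $\sup_{t}\|z^\varepsilon(t) - z^{\varepsilon'}(t)\|_{L^2}^2$ by quantities that vanish as $\varepsilon,\varepsilon' \to 0$; hence $\{z^\varepsilon\}$ is Cauchy in $C\big([0,T];L^2(\Omega)^n\big)$ and converges uniformly to a continuous $L^2$-valued limit, which must agree with $z$ for a.e.\ $t$. Redefining $z$ on this null set gives $z \in C\big([0,T];L^2(\Omega)^n\big)$.

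The step I expect to be the main obstacle is the limit passage in the bilinear integrand. One factor is differentiated in time and lives in the dual space $H^{-1}(\Omega)^n$, while the other lives in $H^1_0(\Omega)^n$, so weak convergence of either factor alone would not suffice to pass to the limit in the product. The argument hinges precisely on having \emph{strong} convergence of both $z^\varepsilon \to z$ and $dz^\varepsilon/dt \to dz/dt$ in their respective $L^2$-in-time spaces, and on the clean identification of the duality pairing with the $L^2$ inner product afforded by the Gelfand-triple structure.
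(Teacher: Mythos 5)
Your proposal is correct: it is the standard mollification-in-time argument, and it would serve as a complete proof of the theorem. It is worth noting, however, that the paper does not actually reprove this result; it cites Theorem~3 of Sec.~5.9.2 in Evans for the scalar case $n=1$ and observes that the vector-valued statement follows by applying that result to each component of $z$ (the pairing $\langle dz/dt, z\rangle$ and the norm $\|z\|_{L^2}^2$ both decompose as sums over components, so the componentwise reduction is immediate). What you have written is, in effect, a direct vector-valued rendition of the very proof Evans gives for the cited theorem. Both routes are fine; yours is self-contained, while the paper's is a one-line reduction to a textbook fact. Two small points you should tighten if you write this out in full: the extension of $z$ beyond $[0,T]$ needed to define $z^\varepsilon$ near the endpoints requires a word of justification (Evans sidesteps it by first proving the integral identity on compact subintervals $[s,t]\subset(0,T)$ with one-sided mollifiers and then passing to the closed interval); and in the Cauchy argument for (i) the base point $s$ must be chosen among the a.e.\ set of times at which $z^\varepsilon(s)\to z(s)$ strongly in $L^2(\Omega)^n$, so that the term $\|z^\varepsilon(s)-z^{\varepsilon'}(s)\|_{L^2}^2$ in the identity actually vanishes in the limit. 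Also, your closing remark is slightly overstated: strong convergence of one factor together with weak convergence and boundedness of the other would already suffice to pass to the limit in the duality product; but since mollification gives strong convergence of both, nothing is at stake.
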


Although only the case $n=1$ is considered in Sec.~5.9.2 in \cite{Evans1998},
one can obtain Theorem~\ref{thm:z_continuity}, the case $n\geq 1$, by applying the result of the case $n=1$ to
 each element of $z$.

We define a weak solution of the coupled parabolic PDE \eqref{eq:coupled_PDE}.
\begin{definition}[Weak solution]
	\label{defn:weak_solution}
	A function $z \in L^2\big(0,T; H^1_0(\Omega)^{n}\big)$ with
	$\frac{dz}{dt} \in L^2\big(0,T; H^{-1}(\Omega)^{n}\big)$
	 is a weak solution 
	of the coupled parabolic PDE \eqref{eq:coupled_PDE} with 
	the initial data $z^0 \in L^2(\Omega)^n$ if 
	the following two conditions hold:
	\begin{enumerate}
		\item For every $v \in H^1_0 (\Omega)^n$ and 
		for a.e. $t \in [0,T]$,
		\begin{equation*}
		%		\label{eq:weak_solution}
		\left\langle
		\frac{dz}{dt}(t), v
		\right\rangle = 
		- \big((A \otimes I_m) \nabla z(t), \nabla v\big)_{L^2} + \big(Bz(t),v\big)_{L^2};
		\end{equation*}
		\item $z(0) = z^0$.
	\end{enumerate}
%	
%	\begin{flalign*} & \text{(i)}~
%	z \in L^2(0,T; H^1_0(\Omega)^{n}) \text{~and~~}
%	\frac{dz}{dt} \in L^2(0,T; H^{-1}(\Omega)^{n});&
%	\end{flalign*}
%	\noindent
%	(ii) For every $v \in H^1_0 (\Omega)^n$ and 
%	for a.e. $t \in [0,T]$,
%	\begin{equation*}
%	%		\label{eq:weak_solution}
%	\left\langle
%	\frac{dz}{dt}(t), v
%	\right\rangle = 
%	- ((A \otimes I_m) \nabla z(t), \nabla v)_{L^2} + (Bz(t),v)_{L^2};
%	\end{equation*}
%	\noindent (iii) $z(0) = g$.
\end{definition}

Since $z \in L^2(0,T; H^1_0(\Omega)^{n})$ and 
$\frac{dz}{dt} \in L^2(0,T; H^{-1}(\Omega)^{n})$ in Definition~\ref{defn:weak_solution},
it follows that
(i) of Theorem~\ref{thm:z_continuity} yields $z \in C\big([0,T]; L^2(\Omega)^{n}\big)$.
Hence the initial condition 2) makes sense.

We place the following coercivity condition on the coefficient matrix $A$ in 
the PDE \eqref{eq:coupled_PDE}, which is used to guarantee the existence and 
uniqueness of weak solutions.
\begin{assumption}
	\label{assum:A}
	There exists $\alpha >0$ such that 
	$A \in \mathbb{R}^{n \times n}$  in the PDE \eqref{eq:coupled_PDE} 
	satisfies $\zeta^{\top} A \zeta \geq \alpha \|\zeta\|^2$ for every $\zeta \in \mathbb{R}^n$
%	\begin{equation}
%	\label{eq:A_assump}
%	\zeta^{\top} A \zeta \geq \alpha \|\zeta\|^2 \qquad \forall \zeta \in \mathbb{R}^n.
%	\end{equation}
\end{assumption}

Applying Galerkin's method,
we see that  if Assumption~\ref{assum:A} is satisfied, then
for every initial data $z^0 \in L^2(\Omega)^n$, there exists a unique weak solution
of the coupled parabolic PDE \eqref{eq:coupled_PDE};
%see, e.g, the appendix of the online version of this paper \cite{Wakaiki2018LMI},
%Sect.~11.1 of \cite{Renardy1993}, and
%Sect.~7.1 of \cite{Evans1998}.
%% ArXiv
see, e.g., Sec.~7.1 of \cite{Evans1998} and
Sec.~11.1 of \cite{Renardy1993}.
To make the paper  self-contained, we provide the proof of the existence and uniqueness of 
weak solutions in the appendix.

We define the exponential stability of 
the coupled parabolic system \eqref{eq:coupled_PDE}.
\begin{definition}[Exponential stability]
	The coupled parabolic system \eqref{eq:coupled_PDE}
	is exponentially stable if 
	there exist $M \geq 1$ and $\gamma >0$ such that
	for each $T > 0$,
	the weak solution $z$ of the PDE \eqref{eq:coupled_PDE} 
	satisfies
	\[
	\|z(t)\|_{L^2} \leq M e^{-\gamma t} \|z^0\|_{L^2} ~~\quad
	\forall z^0 \in L^2(\Omega)^n,~
	\forall t \in [0,T].
	\]
\end{definition}

%This is a simple extension of the scalar case, and we therefore omit the proof.
%Moreover, 
%it follows from
%a standard estimation technique (see, for example, Theorem 2 on p.354 in \cite{Evans1998}) that
%the weak solution continuously depends on the initial data $z^0$
%in the sense that there exists a constant $C \geq 0$ independent of 
%$z^0$ such that
%\begin{equation}
%\label{eq:CD_initial_data}
%\max_{0 \leq t \leq T} \|z(t)\|_{L^2} \leq C \|z^0\|_{L^2}.
%\end{equation}

%Before proceeding to the continuous dependency of 
%the weak solution on the initial data,
To analyze the exponential stability of the coupled PDE, we employ Poincar\'e-Friedrichs' inequality.
\begin{theorem}[Poincar\'e-Friedrichs' inequality]
	\label{thm:poincare_ineq}
	%	Suppose that $\Omega$ is contained between a
	%	pair of parallel hyperplanes situated at a distance $\delta >0$,
	%	Then
	%	\[
	%		\|z\| \leq \delta \|\nabla z\|\qquad \forall z \in H^1_0(\Omega).
	%	\]	
	For every bounded open set $\Omega \subset \mathbb{R}^m$,
	there exists a constant $c= c(\Omega)> 0$ such that 
	\begin{equation}
	\label{eq:poincare_ineq}
	\|z\|_{L^2} \leq c \|\nabla z\|_{L^2}\qquad \forall z \in H^1_0(\Omega).
	\end{equation}
\end{theorem}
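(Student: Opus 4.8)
The plan is to establish the inequality first for smooth test functions $z \in C^\infty_0(\Omega)$ by an elementary one-dimensional argument, and then to extend it to all of $H^1_0(\Omega)$ by density, exploiting the fact that $H^1_0(\Omega)$ is by definition the closure of $C^\infty_0(\Omega)$ in the $H^1$-norm. The key geometric input is that, since $\Omega$ is bounded, it can be enclosed in a slab $\Omega \subset \{x = (x_1,x') \in \mathbb{R} \times \mathbb{R}^{m-1} : a < x_1 < b\}$ for some reals $a < b$; in fact only boundedness in a single coordinate direction is needed.

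First I would fix $z \in C^\infty_0(\Omega)$ and extend it by zero to all of $\mathbb{R}^m$, obtaining a smooth, compactly supported function. Writing $x = (x_1, x')$ and using that the extended $z$ vanishes for $x_1 \leq a$, the fundamental theorem of calculus gives $z(x_1,x') = \int_a^{x_1} \partial_{x_1} z(s,x')\, ds$. Applying the Cauchy--Schwarz inequality to this representation and enlarging the interval of integration yields the pointwise bound $|z(x_1,x')|^2 \leq (b-a)\int_a^b |\partial_{x_1} z(s,x')|^2\, ds$. Integrating this estimate over $x_1 \in (a,b)$ and $x' \in \mathbb{R}^{m-1}$, and using $|\partial_{x_1} z(x)| \leq \|\nabla z(x)\|$ pointwise, produces $\|z\|_{L^2}^2 \leq (b-a)^2 \|\nabla z\|_{L^2}^2$, so the inequality holds on $C^\infty_0(\Omega)$ with the explicit constant $c = b-a$.

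Finally, to pass to an arbitrary $z \in H^1_0(\Omega)$, I would choose a sequence $z_k \in C^\infty_0(\Omega)$ with $z_k \to z$ in $H^1(\Omega)$. Since the maps $z \mapsto \|z\|_{L^2}$ and $z \mapsto \|\nabla z\|_{L^2}$ are both continuous with respect to the $H^1$-norm, taking the limit in $\|z_k\|_{L^2} \leq c\,\|\nabla z_k\|_{L^2}$ preserves the inequality. The computation itself is routine; the only points that require care are that the boundary term at $x_1 = a$ genuinely vanishes, which relies on the zero extension of the compactly supported $z$, and that the density argument is legitimate, which is exactly the continuity of both norms on $H^1(\Omega)$. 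The conceptually important observation is that boundedness of $\Omega$ in just one direction already suffices to run the entire argument and to produce an explicit constant.
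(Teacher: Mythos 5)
Your proof is correct and complete. The paper itself does not prove this theorem; it states it as a known result and cites Proposition 13.4.10 of Tucsnak--Weiss for the slab constant and Dym--McKean for the sharp one-dimensional constant, so there is no in-paper argument to compare against. What you supply is precisely the standard textbook proof underlying those references: the fundamental theorem of calculus in a single coordinate direction for $z \in C^\infty_0(\Omega)$ extended by zero, Cauchy--Schwarz to get the pointwise bound $|z(x_1,x')|^2 \leq (b-a)\int_a^b |\partial_{x_1} z(s,x')|^2\,ds$, integration to obtain $c = b-a$, and a density argument using that $H^1_0(\Omega)$ is by definition the $H^1$-closure of $C^\infty_0(\Omega)$ and that both $\|\cdot\|_{L^2}$ and $\|\nabla \cdot\|_{L^2}$ are continuous in the $H^1$-norm. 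All steps are sound, and your constant $b-a$ is exactly the distance-$\delta$ constant the paper quotes for a domain lying between two parallel hyperplanes. One minor remark: your slab argument does not recover the sharp constant $(b-a)/\pi$ mentioned in the paper for $\Omega=(a,b)\subset\mathbb{R}$ (that requires a spectral or Fourier-series argument), but the theorem as stated only asserts existence of some $c>0$, so this is not a gap.
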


If $\Omega$ is contained between a
pair of parallel hyperplanes situated at a distance $\delta >0$, then
the constant $c$ of Poincar\'e-Friedrichs' inequality is given by
$\delta $; see, e.g.,  Proposition 13.4.10  in \cite{Tucsnak2009}.
If $\Omega = (a,b) \subset \mathbb{R}$, then
$c = (b-a) / \pi$, which
cannot be improved; see, e.g.,  Sec.~1.7.2  in \cite{Dym1972}.

Applying 
Poincar\'e-Friedrichs' inequality
to each element of $z \in H^1_0(\Omega)^{n}$,
we obtain the following result:
\begin{corollary}
	\label{coro:poincare}
	Let $\Omega \subset \mathbb{R}^m$ be bounded and open.
	For every $z \in H^1_0(\Omega)^{n}$ and 
	every positive definite diagonal matrix $\Lambda \in \mathbb{R}^{n \times n}$,
	we obtain 
	\begin{equation}
	\label{eq:S_pro}
	\int_{\Omega} 
	\begin{bmatrix}
	z(x) \\ \nabla z(x)
	\end{bmatrix}^{\top}
	\begin{bmatrix}
	-\Lambda & 0 \\
	0 & c^2 \Lambda \otimes I_m
	\end{bmatrix}
	\begin{bmatrix}
	z(x) \\ \nabla z(x)
	\end{bmatrix}dx \geq 0,
	\end{equation}	
	where $c >0$ is a constant of the 
	Poincar\'e-Friedrichs' inequality \eqref{eq:poincare_ineq}.
\end{corollary}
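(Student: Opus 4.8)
The plan is to reduce the matrix inequality \eqref{eq:S_pro} to the scalar Poincar\'e--Friedrichs inequality of Theorem~\ref{thm:poincare_ineq} by exploiting the diagonal structure of $\Lambda$. First I would expand the block quadratic form in the integrand pointwise. Writing $\Lambda = \mathrm{diag}(\lambda_1,\dots,\lambda_n)$ with each $\lambda_i>0$, and recalling that $\nabla z$ stacks the gradients $\nabla z_1,\dots,\nabla z_n$ so that the $(i,i)$ block of $\Lambda \otimes I_m$ equals $\lambda_i I_m$, the integrand becomes
\[
-\,z(x)^{\top}\Lambda z(x) + c^2\, \nabla z(x)^{\top}(\Lambda \otimes I_m)\nabla z(x) = \sum_{i=1}^n \lambda_i\Big( c^2 \|\nabla z_i(x)\|^2 - z_i(x)^2\Big).
\]
Integrating over $\Omega$ and interchanging the finite sum with the integral, the left-hand side of \eqref{eq:S_pro} equals $\sum_{i=1}^n \lambda_i\big(c^2\|\nabla z_i\|_{L^2}^2 - \|z_i\|_{L^2}^2\big)$.

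Next, since $z\in H^1_0(\Omega)^n$ implies $z_i\in H^1_0(\Omega)$ for each $i$, I would apply Theorem~\ref{thm:poincare_ineq} componentwise: $\|z_i\|_{L^2}\le c\,\|\nabla z_i\|_{L^2}$, hence $c^2\|\nabla z_i\|_{L^2}^2 - \|z_i\|_{L^2}^2 \ge 0$. Because every $\lambda_i>0$, each summand is nonnegative, and therefore so is the sum, which yields \eqref{eq:S_pro}.

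The argument is essentially routine and presents no serious obstacle; it follows by applying the scalar inequality to each component, exactly as anticipated in the sentence preceding the corollary statement. The only point requiring a little care is the Kronecker bookkeeping: with the stacking convention $\nabla z = \begin{bmatrix}\nabla z_1^{\top} & \cdots & \nabla z_n^{\top}\end{bmatrix}^{\top}$, one must verify that $\Lambda \otimes I_m$ weights the $i$-th gradient block by the scalar $\lambda_i$, so that the diagonal structure of $\Lambda$ makes the quadratic form separate cleanly into the $n$ scalar Poincar\'e inequalities. Once this decoupling is in place, the nonnegativity of $\{\lambda_i\}$ does the rest.
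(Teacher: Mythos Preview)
Your argument is correct and is precisely the approach the paper has in mind: the sentence preceding the corollary already indicates that one applies the scalar Poincar\'e--Friedrichs inequality componentwise, and your expansion using $\Lambda=\mathrm{diag}(\lambda_1,\dots,\lambda_n)$ together with the block structure of $\Lambda\otimes I_m$ makes this explicit. There is nothing to add.
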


\section{Stability Analysis by Lyapunov Functions with Constant $P$}
First we study the stability of the coupled parabolic PDE \eqref{eq:coupled_PDE},
by using Lyapunov function with constant $P$.
We place the following assumptions on  the bounded open set $\Omega$:
\begin{assumption}
	\label{assum:general_set}
	For a bounded open set $\Omega \subset \mathbb{R}^m$, let 
	Lebesgue measurable
	sets $\Omega_k \subset \mathbb{R}^m$ ($k=1,\dots,N$) 
	satisfy the following conditions:
	\begin{flalign*} & \text{(i)}~
	\Omega = \bigcup_{k=1}^N \Omega_k;& \\
	& \text{(ii)}~\Omega_{k} \cap \Omega_{\ell} = \emptyset \qquad \forall k,\ell=
	1,\dots,N \text{~with~$k\not=\ell$}.&
	\end{flalign*}
	%	\noindent
	%\begin{align*}
	%	&\Omega = \bigcup_{k=1}^N \Omega_k \\
	%	&\Omega_{k} \cap \Omega_{\ell} = \emptyset \qquad \forall k,\ell=
	%	1,\dots,N \text{~with~$k\not=\ell$}.
	%\end{align*}
\end{assumption}
\begin{assumption}
	\label{assum:general_set2}
	For every $k=1,\dots,N$, 
	the matrix $B_k \in \mathbb{R}^{n \times n}$ and the scalar $\rho_k > 0$ satisfy
	\begin{equation}
	\label{eq:B_bound}
	\|B(x) - B_k\| \leq \rho_k
	\qquad \text{a.e.~} x \in \Omega_k.
	\end{equation}	
\end{assumption}

For example, we can choose $B_k = B(\omega_k)$, where
$\omega_k \in \mathbb{R}^m$ is the ``center'' of $\Omega_k$.
The scalar $\rho_k$ is the approximation error of $B_k$.
The disjoint subsets $\Omega_1,\dots,\Omega_N$ are
tuning parameters in our stability analysis. 
%After splitting $\Omega$ into 
%$\Omega_1,\dots,\Omega_N$,
%we can easily compute $B_k$ and $\rho_k$ for every $k=1,\dots,N$.

%We first state the result on stability analysis 
%for every bounded open set $\Omega$.
We then have the following sufficient LMI condition for stability:
\begin{theorem}
	\label{thm:stability}
	Let Assumptions~\ref{assum:A}, \ref{assum:general_set}, and \ref{assum:general_set2} hold.
	The coupled parabolic system \eqref{eq:coupled_PDE}
	is exponentially stable if
	there exist a positive definite matrix $P \in \mathbb{R}^{n \times n}$,
	a positive definite diagonal matrix $\Lambda\in \mathbb{R}^{n \times n}$,
	and a positive scalar $\sigma_k$ such that
	the following LMIs are feasible for all $k = 1,\dots,N$:
	\begin{subequations}
		\label{eq:stability_LMI}
		\begin{align}
		\label{eq:LMI1}
		\begin{bmatrix}
		\Lambda - \sigma_k I_n- B_k^{\top}P-PB_k \quad & \rho_k P \\
		\star \quad& \sigma_k I_n
		\end{bmatrix} &\succ 0 \\
		\label{eq:LMI2}
		A^{\top} P + PA^{\top} - 
		c^2 \Lambda &\succeq 0,
		\end{align}
		where $c >0$ is a constant of the 
		Poincar\'e-Friedrichs' inequality \eqref{eq:poincare_ineq}.
	\end{subequations}
\end{theorem}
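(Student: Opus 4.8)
The plan is to prove that the quadratic Lyapunov function $V(z)=\int_\Omega z^\top P z\,dx$ decays exponentially along every weak solution, i.e.\ to establish a differential inequality $\frac{d}{dt}V(z(t))\le -2\gamma V(z(t))$ with $\gamma>0$. First I would justify differentiating $V$. Since $P\succ0$ is a constant matrix, I set $w(t):=P^{1/2}z(t)$ (applying the constant invertible matrix $P^{1/2}$ to the fibre variable); then $w\in L^2(0,T;H^1_0(\Omega)^n)$ with $\frac{dw}{dt}=P^{1/2}\frac{dz}{dt}\in L^2(0,T;H^{-1}(\Omega)^n)$ and $\|w(t)\|_{L^2}^2=V(z(t))$. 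Theorem~\ref{thm:z_continuity} applied to $w$ then shows that $t\mapsto V(z(t))$ is absolutely continuous with $\frac{d}{dt}V(z(t))=2\langle \frac{dz}{dt}(t),Pz(t)\rangle$ for a.e.\ $t$. Because $P$ is constant, $Pz(t)\in H^1_0(\Omega)^n$, so $v=Pz(t)$ is an admissible test function in Definition~\ref{defn:weak_solution}; using $\nabla(Pz)=(P\otimes I_m)\nabla z$ and symmetrizing the resulting scalar quadratic forms, I obtain
\[
\frac{d}{dt}V = -\int_\Omega \nabla z^\top\big((A^\top P+PA)\otimes I_m\big)\nabla z\,dx + \int_\Omega z^\top(B^\top P+PB)z\,dx.
\]

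Next I would control the spatially varying reaction term using the partition of Assumption~\ref{assum:general_set}. On each $\Omega_k$ I split $B(x)=B_k+(B(x)-B_k)$, so the integrand equals $z^\top(B_k^\top P+PB_k)z+2(Pz)^\top(B(x)-B_k)z$. For the cross term I apply Young's inequality $2u^\top v\le \frac{\rho_k^2}{\sigma_k}\|u\|^2+\frac{\sigma_k}{\rho_k^2}\|v\|^2$ with $u=Pz$ and $v=(B(x)-B_k)z$, together with $\|(B(x)-B_k)z\|^2\le\rho_k^2\|z\|^2$ from Assumption~\ref{assum:general_set2}, yielding $2(Pz)^\top(B(x)-B_k)z\le \frac{\rho_k^2}{\sigma_k}z^\top P^2 z+\sigma_k\|z\|^2$ a.e.\ on $\Omega_k$. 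The Schur complement of \eqref{eq:LMI1} with respect to its $\sigma_k I_n$ block is exactly $S_k:=\Lambda-\sigma_k I_n-B_k^\top P-PB_k-\frac{\rho_k^2}{\sigma_k}P^2\succ0$, so setting $\delta:=\min_{1\le k\le N}\lambda_{\min}(S_k)>0$ gives the pointwise estimate $z^\top(B^\top P+PB)z\le z^\top\Lambda z-\delta\|z\|^2$ for a.e.\ $x\in\Omega$. For the diffusion term, \eqref{eq:LMI2} provides $(A^\top P+PA)\otimes I_m\succeq c^2(\Lambda\otimes I_m)$, bounding that term above by $-c^2\int_\Omega\nabla z^\top(\Lambda\otimes I_m)\nabla z\,dx$.

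Combining the two estimates,
\[
\frac{d}{dt}V \le -c^2\int_\Omega\nabla z^\top(\Lambda\otimes I_m)\nabla z\,dx + \int_\Omega z^\top\Lambda z\,dx - \delta\|z\|_{L^2}^2.
\]
The first two terms are precisely the quadratic form appearing in Corollary~\ref{coro:poincare}, so Poincar\'e--Friedrichs' inequality renders their sum nonpositive, leaving $\frac{d}{dt}V\le -\delta\|z\|_{L^2}^2$. Since $V\le\lambda_{\max}(P)\|z\|_{L^2}^2$, this is $\frac{d}{dt}V\le -2\gamma V$ with $2\gamma:=\delta/\lambda_{\max}(P)>0$. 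Because $V$ is absolutely continuous, multiplying by the integrating factor $e^{2\gamma t}$ gives $V(z(t))\le e^{-2\gamma t}V(z^0)$, and the sandwich $\lambda_{\min}(P)\|z\|_{L^2}^2\le V\le\lambda_{\max}(P)\|z\|_{L^2}^2$ converts this into $\|z(t)\|_{L^2}\le M e^{-\gamma t}\|z^0\|_{L^2}$ with $M=\sqrt{\lambda_{\max}(P)/\lambda_{\min}(P)}\ge1$, which is exponential stability.

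I expect the main obstacle to be the rigorous justification of the differentiation step and the admissibility of $v=Pz(t)$ as a test function inside the weak-solution framework, rather than the algebra: one must verify the $P$-weighted version of Theorem~\ref{thm:z_continuity} and that the duality pairing commutes with the constant matrix $P$. A second delicate point is producing a single strictly positive rate $\gamma$ independent of the partition: the margin $\delta$ must be taken uniform over the finitely many cells $\Omega_k$, and the indefinite term $\int_\Omega z^\top\Lambda z\,dx$ must be fully absorbed by the diffusion term through Corollary~\ref{coro:poincare}. This is precisely why the two inequalities \eqref{eq:LMI1} and \eqref{eq:LMI2} are coupled through the common variable $\Lambda$ and the Poincar\'e constant $c$.
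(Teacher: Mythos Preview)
Your proposal is correct and follows essentially the same route as the paper: the same constant-$P$ Lyapunov function, the same computation of $\frac{dV}{dt}$ via the weak formulation with test function $Pz(t)$, the same use of Corollary~\ref{coro:poincare} to absorb the $\Lambda$-term, and the same Gronwall conclusion with $M=\sqrt{\lambda_{\max}(P)/\lambda_{\min}(P)}$. The only cosmetic difference is in handling the perturbation $B(x)-B_k$: you use Young's inequality plus the Schur complement of \eqref{eq:LMI1}, whereas the paper writes $B(x)-B_k=\rho_k\Phi_k(x)$ with $\|\Phi_k(x)\|\le 1$ and evaluates the congruence $\begin{bmatrix}I_n\\-\Phi_k(x)\end{bmatrix}^{\!\top}G_k\begin{bmatrix}I_n\\-\Phi_k(x)\end{bmatrix}$ directly on the LMI block $G_k$; both arguments produce exactly the same pointwise inequality $\Lambda - B(x)^\top P - PB(x)\succeq \epsilon I_n$.
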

\begin{proof}
	1. 
	%	Let us denote by
	%	$\delta_{\min}$ and $\delta_{\max}$  the minimum and the 
	%	maximum eigenvalues of $P$, and 
	Using the positive definite matrix $P \in \mathbb{R}^{n \times n}$, we define 
	\begin{equation*}
	%\label{eq:Lyapunov}
	V(z) := (z,Pz)_{L^2} \qquad \forall z \in L^2(\Omega)^n.
	\end{equation*}
	%	The Lyapunov function $V(z)$ in \eqref{eq:Lyapunov} satisfies
	%	\[
	%	\delta_{\min} \|z\|_{L^2}^2 \leq 
	%	V(z) 
	%	\leq
	%	\delta_{\max} \|z\|_{L^2}^2 \qquad 
	%	\forall z \in L^2(\Omega).
	%	\]
	%	
	%	Instead of $V(z)$ in \eqref{eq:Lyapunov},
	We use the notation $V(t) := V\big(z(t)\big)$ for simplicity, where
	$z$ is the weak solution of the PDE \eqref{eq:coupled_PDE}.
	%Define a Lyapunov function $V(t)$ by
	%\[
	%V(t) := \int_\Omega z(x,t)^{\top}P z(x,t)dx.
	%\]
%	Since 
%	the map 
%	\[
%	x,y \in L^2(\Omega)^n \to (x,Py)_{L^2} \in \mathbb{R}^n
%	\]
%	is also an inner product of
%	$L^2(\Omega)^n$,
	One can see that 
	$V(t)$ is  absolutely continuous on $[0,T]$ and 
	\[
	\frac{dV}{dt} (t)= 2 
	\left \langle 
	\frac{dz}{dt}(t) , Pz(t)
	\right\rangle\qquad \text{a.e.~} t \in [0,T]
	\]
	from the same argument as in the proof of
	(ii) of Theorem~\ref{thm:z_continuity}.
	Since $Pz(t) \in H^1_0(\Omega)^n$ for a.e. $t \in [0,T]$,
	it follows from the condition~1) in Definition~\ref{defn:weak_solution} that
	%	for a.e. $t \in [0,T]$, 
	%	\begin{align*}
	%	\frac{dV}{dt}(t) &= 
	%	\int_{\Omega}
	%	\partial_t z(x,t)^{\top} P z(x,t) + 
	%	z (x,t)^{\top} P \partial_t z(x,t)dx \\
	%	&=
	%	\int_{\Omega} (A\Delta z(x,t) + B(x) z(x,t))^{\top} P z(x,t) \\
	%	&\qquad +
	%	z^{\top} (x,t)P (A\Delta z(x,t) + B(x) z(x,t)) dx.
	%	\end{align*}
	%	Integrating by parts (see, e.g., Theorem 13.7.1 on p. 430 in \cite{Tucsnak2009}), 
	%	we have from the Dirichlet boundary condition that
	%	\begin{align*}
	%	&
	%	\int_{\Omega} \! (\Delta z(x,t))\!^{\top} A^{\top}P z(x,t) \!+\!
	%	z(x,t)\!^{\top} P A \Delta z(x,t) dx \\
	%	&=\!
	%	-\!
	%	\int_{\Omega} \!
	%	\big(
	%	(\nabla z(x,t))\!^{\top}
	%	(A^{\top}P \!+\! P A) \! \otimes \! I_m
	%	\big)
	%	\nabla z(x,t) dx.
	%	\end{align*}
	%	We therefore obtain 
	\begin{align}
	\label{eq:V_derivative}
	\frac{dV}{dt}(t)
	=
	\int_{\Omega} 
	\begin{bmatrix}
	z(x,t) \\ \nabla z(x,t)
	\end{bmatrix}^{\top} 
	M(x)
	\begin{bmatrix}
	z(x,t) \\ \nabla z(x,t)
	\end{bmatrix}dx
	\end{align}
	for a.e. $t \in [0,T]$, 
	where 
	\begin{align*}
	M(x) :=  
	\begin{bmatrix}
	B(x)^{\top} P + PB(x) & 0 \\
	0 &  -(A^{\top}P + P A)  \otimes I_m
	\end{bmatrix}.
	\end{align*}
	On the other hand, 
	since $z(\cdot,t) \in H^1_0(\Omega)^n$ for a.e. $t \in [0,T]$,
	%	we obtain
	%	\[
	%	\|z_p(t)\|_{L^2} \leq c \|\nabla z_p(t)\|_{L^2}~~~
	%	\text{a.e.~} t \in [0,T],~ \forall p=1,\dots,n.
	%	\]
	%	Therefore, 
	Corollary~\ref{coro:poincare} shows 
	that for a.e.  $t \in [0,T]$, 
	\begin{equation}
	\label{eq:S_pro_constant}
	\int_{\Omega} 
	\begin{bmatrix}
	z(x,t) \\ \nabla z(x,t)
	\end{bmatrix}^{\top}
	\begin{bmatrix}
	-\Lambda & 0 \\
	0 & c^2 \Lambda \otimes I_m
	\end{bmatrix}
	\begin{bmatrix}
	z(x,t) \\ \nabla z(x,t)
	\end{bmatrix}dx \geq 0.
	\end{equation}
	Therefore, if there exists $\epsilon >0$ such that 
	for a.e. $x \in \Omega$,
	\begin{subequations}
		\label{eq:Lambda_x}
		\begin{align}
		\label{eq:BP_Lambda_x}
		\Lambda - B(x)^{\top} P - PB(x) &\succeq \epsilon I_n \\
		\label{eq:AP_Lambda_x}
		(A^{\top} P + PA - c^2 \Lambda) \otimes I_m&\succeq 0,
		\end{align}
	\end{subequations}
	then it follows from \eqref{eq:V_derivative} and \eqref{eq:S_pro_constant} that
	\begin{equation}
	\label{eq:V_deriv_z}
	\frac{dV}{dt}(t) \leq
	-\epsilon \|z(t)\|_{L^2}^2 \qquad \text{a.e.~} t \in [0,T].
	\end{equation}
	
	2. 
	We next show that if the LMIs
	\eqref{eq:stability_LMI} are feasible, then
	there exists a constant $\epsilon >0$ such that
	the inequalities \eqref{eq:Lambda_x} 
	hold for a.e. $x \in \Omega$.                                  
	%	Define
	%	\[
	%	G_k:=
	%	\begin{bmatrix}
	%	\Lambda - (\sigma_k+\epsilon) I_n- B_k^{\top}P-PB_k  & \rho_k P \\
	%	\rho_k P & \sigma_k I_n
	%	\end{bmatrix}.
	%	\]
	By the LMI \eqref{eq:LMI1},
	there exists $\epsilon >0$
	such that for all $k=1,\dots,N$,
	\begin{align}
	\label{eq:LMI1_eps}
	G_k := \begin{bmatrix}
	\Lambda - (\sigma_k+\epsilon) I_n- B_k^{\top}P-PB_k  \quad & \rho_k P \\
	\star  \quad& \sigma_k I_n
	\end{bmatrix} \succ 0.
	%		\label{eq:LMI2_eps}
	%		A^{\top} P + P A - c^2 \Lambda &\succ \varepsilon I_n.
	\end{align}
	%For every $\Phi \in \mathbb{R}^{n\times n}$ with $\| \Phi \| \leq 1$, 
	%we have that $I - \Phi^{\top} \Phi \geq 0$ and that
	By \eqref{eq:B_bound}, for every $k=1,\dots,N$,
	there exists a measurable function $\Phi_k:\Omega_k \to \mathbb{R}^{n \times n}$ such that
	for a.e. $x \in \Omega_k$,
	\begin{subequations}
		\label{eq:Bx_Phi_cond}
		\begin{align}
		\label{eq:Bx_Phi}
		B(x)- B_k &= \rho_k \Phi_k(x) \\
		\label{eq:Phi_bound}
		\|\Phi_k(x)\| &\leq 1.
		\end{align}
	\end{subequations}
	Since $I_n - \Phi_k^{\top}(x) \Phi_k(x) \succeq 0$ for a.e. $x \in \Omega_k$ 
	by \eqref{eq:Phi_bound}, 
	it follows from \eqref{eq:Bx_Phi} that
	\begin{align*}
	&\begin{bmatrix}
	I_n \\ -\Phi_k(x)
	\end{bmatrix}^{\top}
	G_k
	%\begin{bmatrix}
	%\Lambda - \sigma_k I_n- (PB_k+B_k^{\top}P) - \varepsilon_0I_n & \rho_k P \\
	%\rho_k P & \Upsilon_k
	%\end{bmatrix}
	\begin{bmatrix}
	I_n \\ -\Phi_k(x)
	\end{bmatrix}  \\
	%&=
	%\Lambda - \sigma_k I_n- (PB_k+B_k^{\top}P) - \varepsilon_0I_n \\
	%&\qquad -
	%\rho_k(P\Phi_k+\Phi_k^{\top}P) - \sigma_k  \Phi^{\top}(x) \Phi(x) \\
	&\quad =
	\Lambda - \big(B_k+\rho_k\Phi_k(x)\big)^{\top}P  -  P\big(B_k+\rho_k\Phi_k(x)\big) - \epsilon I_n  \\
	&\qquad \qquad - \sigma_k \big(I_n -\Phi_k(x)^{\top} \Phi_k(x)\big) \\
	&\quad \preceq 
	\Lambda - B(x)^{\top}P  - PB(x)-\ \epsilon I_n 
	~~~ \text{a.e.~} x \in \Omega_k. 
	\end{align*} 
	%Since the matrix
	%\[
	%\begin{bmatrix}
	%I \\ -\Phi(x)
	%\end{bmatrix}
	%\]
	%is full column rank, 
	Thus,
	the inequality 
	\eqref{eq:LMI1_eps}
	yields \eqref{eq:BP_Lambda_x}.
	Moreover, 
	since $(A^{\top} P + PA - c^2\Lambda) \otimes I_m$ and $A^{\top} P + PA - c^2\Lambda$ have the same eigenvalues,
	it follows that \eqref{eq:LMI2}  implies
	\eqref{eq:AP_Lambda_x}.
	
	3. 
	Finally, we show that 
	the inequality  \eqref{eq:V_deriv_z} leads to
	the exponential stability of the coupled parabolic
	system \eqref{eq:coupled_PDE}.
	Let $\delta_{\min}$ and $\delta_{\max}$ be 
	the minimum and maximum eigenvalues of $P$, respectively.
	From the inequality \eqref{eq:V_deriv_z}, we find that
	\begin{align}
	\label{eq:dV_bound}
	\frac{dV}{dt}(t) 
	&\leq -2\gamma V(t)
	\quad \text{a.e.~} t \in [0,T],
	\end{align}
	where 
	$\gamma := \epsilon/(2\delta_{\max})$.
	%From \eqref{eq:dV_bound} 
	%we see from the fundamental theorem of calculus that
	%\begin{align*}
	%V(T) 
	%&= V(0) + \int^{T}_0 \frac{dV}{dt}(t) dt \\
	%&\leq V(0) - \int^T_0 \gamma V(t) dt\quad
	%\forall T >0.
	%\end{align*}
	%Then using the integral form of Gronwall's inequality, we obtain 
	Since $V(t)$ is absolutely continuous on $[0,T]$,
	Gronwall's inequality (see, e.g., 
	%Theorem 1.12 on p. 12 in [Tao]
	Appendix B.2.j  in \cite{Evans1998}) yields
	\[
	V(t) \leq V(0)e^{-2\gamma t}\qquad \forall t \in [0,T].
	\]
	Thus, for each $T > 0$ and
	each initial state $z^0 \in L^2(\Omega)^n$,
	the solution $z(t)$ of the parabolic PDE \eqref{eq:coupled_PDE} satisfies
	\[
	\|z(t)\|_{L^2} \leq \sqrt{\frac{\delta_{\max}}{\delta_{\min}}}
	e^{-\gamma t} \|z^0\|_{L^2}\qquad \forall t \in [0,T].
	\]
	This completes the proof.
	%	Let us show exponential convergence  for all
	%	initial state $z^0 \in L^2(\Omega)$.
	%	Fix $T >0$ and $z^0 \in L^2(\Omega)^n$ arbitrarily.
	%	Since $H^1_0(\Omega)^n$ is dense in $L^2(\Omega)^n$ and
	%	since the solution $z(t)$ of \eqref{eq:coupled_PDE}  continuously depends
	%	on the initial data $z^0$ as in \eqref{eq:CD_initial_data},
	%	it follows that for every $\varepsilon >0$, there exists 
	%	$w^0 \in H^1_0(\Omega)^n$ such that the solution $w(t)$
	%	of \eqref{eq:coupled_PDE} with
	%	the initial data $w^0$ satisfies
	%	\[
	%	\|z(t) - w(t)\|_{L^2} \leq \frac{\varepsilon}{1+M} \qquad \forall t \in [0,T].
	%	\]
	%	Hence we obtain
	%	\begin{align*}
	%	\|z(t)\|_{L^2} &\leq \|w(t)\|_{L^2} + \|z(t) - w(t)\|_{L^2} 
	%	\leq 
	%	Me^{-\gamma t} \|z^0\| + \varepsilon \qquad \forall t \in [0,T].
	%	\end{align*}
	%	Since $\varepsilon >0$ is arbitrary,
	%	it follows that
	%	\begin{align*}
	%	\|z(t)\|_{L^2} \leq
	%	Me^{-\gamma t} \|z^0\| \qquad \forall t \in [0,T].
	%	\end{align*}
	%	Note that $M = \sqrt{\delta_{\max} / \delta_{\min}}$ and 
	%	$\gamma = \epsilon_0/\delta_{\max}$ do not depend on $T$.
	%	Thus the coupled parabolic system \eqref{eq:coupled_PDE}
	%	is exponentially stable.
\end{proof}

\begin{remark}[Complexity of  LMIs in Theorem~\ref{thm:stability}]
	Let us study the numbers of variables in the LMIs of Theorem
	\ref{thm:stability}.
	In these LMIs, 
	the matrices $P$ and $\Lambda$ have $O(n^2)$ and $O(n)$ variables, respectively.
	On the other hand, the number of the scalar variables $\sigma_1,\dots,\sigma_N$ is $O(N)$.
	In total,  the LMIs of Theorem
	\ref{thm:stability} contain $O(n^2+N)$ variables.
	Suppose that the number $N$ of the disjoint subsets $\Omega_1,\dots,\Omega_N$
	is given by $N=2^m$, which makes sense due to the curse of 
	dimensionality. Then the worst-case number is given by $O(n^2+2^m)$.
\end{remark}

\section{Stability Analysis by Lyapunov Functions with Piecewise Linear $P$}
In this section, we analyze the stability of the coupled parabolic system \eqref{eq:coupled_PDE}, 
by using Lyapunov functions that depend on the space variable in a
piecewise linear fashion.
We impose the following assumption on the bounded open set $\Omega$:
\begin{assumption}
	\label{assum:polytopic_set}
	For a bounded open set $\Omega \subset \mathbb{R}^m$, let
	$m$-simplices $\Omega_1,\dots,\Omega_N \subset \mathbb{R}^m$ satisfy
	the following conditions:
	\begin{flalign*} & \text{(i)}~
	\overline{\Omega} = \bigcup_{k=1}^N \Omega_k ;& \\
	& \text{(ii)}~\Omega_j \cap \Omega_k
	\not=\emptyset 
	~~\Rightarrow~~
	\text{$\Omega_j \cap \Omega_k$ is a face of $\Omega_j$
		and $\Omega_k$.}&
	\end{flalign*}
\end{assumption}

For $k = 1,\dots,N$, 
let $\xi_0^k,\dots,\xi_m^k$ be the vertices of the $m$-simplex $\Omega_k$.
We reorder the set $\{\xi_0^k,\dots,\xi_m^k:k =1\dots,N\}$  into
$\{\xi_1\dots,\xi_{N_0}\}$ without duplication. Namely, 
$\{\xi_0^k,\dots,\xi_m^k:k =1\dots,N\} = \{\xi_1\dots,\xi_{N_0}\}
$
and $\xi_k \not= \xi_\ell$ for every $k,\ell = 1,\dots,N_0$ with $k\not=\ell$.
Let $\xi_{p(k,0)},\dots, \xi_{p(k,m)}$ be the vertices of $\Omega_k$
for every $k = 1,\dots,N$. Define a matrix $B_p := B(\xi_p)$ for every $p=1,\dots,N_0$, and
let $x \in \Omega_k$ be represented as  
\begin{equation}
\label{eq:alpha_simplex}
x = \sum_{\ell=0}^m \alpha_{p(k,\ell)}(x) \xi_{p(k,\ell)},
\end{equation}
where the coefficients
$\alpha_{p(k,0)}(x),\dots,\alpha_{p(k,m)}(x)$
%$\alpha_{p(k,\ell)}(x)$ ($\ell=0,\dots,m$) 
are nonnegative and satisfy
$\sum_{\ell=0}^m \alpha_{p(k,\ell)}(x) = 1$.

We formulate the remaining assumption.
\begin{assumption}
	\label{assum:polytopic_set2}
	For every $k=1,\dots,N$,  a scalar $\rho_k >0$ satisfies
	\begin{equation}
	\label{eq:B_bound2}
	\left\| 
	B(x) - \sum_{\ell=0}^m \alpha_{p(k,\ell)} (x) B_{p(k,\ell)}  
	\right\| \leq \rho_k\qquad \text{a.e.~}x\in \Omega_k.
	\end{equation}
\end{assumption}

As in Assumption~\ref{assum:general_set},
the tuning of the disjoint sets $\Omega_1,\dots\Omega_N$ is needed
in our stability analysis to obtain a less conservative result.

For the second main result,
we use the following lemma on LMIs, inspired by 
the stability analysis of systems with polytopic uncertainty developed, e.g., 
in \cite{Peaucelle2000}:
\begin{lemma}
	\label{lem:LMI_formula}
	For every  symmetric matrix $M$ and
	for every matrices $B$ and $P$,
	the inequality
	\begin{equation}
	\label{eq:up_xi_expansion1}
	M - B^{\top}P - P^{\top}B \succeq 0
	\end{equation}
	is satisfied
	if and only if there exist 
	(not necessarily symmetric) 
	matrices $\Upsilon$ and $\Xi$ such that
	\begin{equation}
	\label{eq:up_xi_expansion2}
	\begin{bmatrix}
	M - B^{\top}\Upsilon - \Upsilon^{\top} B \quad 
	& \Upsilon^{\top} - P^{\top} - B^{\top}\Xi \\
	\star \quad & \Xi + \Xi^{\top}
	\end{bmatrix} \succeq 0.
	\end{equation}
\end{lemma}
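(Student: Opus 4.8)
The plan is to prove the two implications separately, treating the equivalence as an explicit instance of the elimination (projection) lemma for which concrete choices of the slack variables, rather than an abstract null-space argument, already suffice. Throughout I would keep the inequalities non-strict, which dictates the style of both arguments.

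For the sufficiency direction (\eqref{eq:up_xi_expansion2} $\Rightarrow$ \eqref{eq:up_xi_expansion1}), I would apply a congruence transformation to the dilated matrix. Writing $\mathcal{M}$ for the left-hand side of \eqref{eq:up_xi_expansion2} (whose hidden $(2,1)$ block equals $\Upsilon - P - \Xi^{\top} B$) and setting $T := \begin{bmatrix} I \\ B \end{bmatrix}$ with blocks conformable to the partition, I would compute $T^{\top}\mathcal{M}T$. A direct block multiplication collapses all the slack-variable terms: the contributions of $\Upsilon$ and of $\Xi$ cancel in pairs, leaving exactly $T^{\top}\mathcal{M}T = M - B^{\top}P - P^{\top}B$. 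Since congruence preserves positive semidefiniteness — for any $x$, $x^{\top}T^{\top}\mathcal{M}T x = (Tx)^{\top}\mathcal{M}(Tx) \ge 0$ — the assumption $\mathcal{M}\succeq 0$ immediately yields \eqref{eq:up_xi_expansion1}. The only care needed is the bookkeeping of the cancellations; crucially, no block is inverted, so the argument stays valid even when $M - B^{\top}P - P^{\top}B$ is singular.

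For the necessity direction (\eqref{eq:up_xi_expansion1} $\Rightarrow$ \eqref{eq:up_xi_expansion2}), rather than eliminating blocks by a Schur complement — which would require the $(2,2)$ block to be invertible and would introduce a correction of the form $\tfrac{\mu}{2}B^{\top}B$ that $M - B^{\top}P - P^{\top}B\succeq 0$ need not dominate in its singular directions — I would simply exhibit a degenerate feasible pair. Taking $\Upsilon = P$ and $\Xi = 0$ makes both the off-diagonal block $\Upsilon^{\top} - P^{\top} - B^{\top}\Xi$ and the $(2,2)$ block $\Xi + \Xi^{\top}$ vanish and reduces the $(1,1)$ block to $M - B^{\top}P - P^{\top}B$. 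Hence \eqref{eq:up_xi_expansion2} becomes the block-diagonal condition $\mathrm{diag}\!\left(M - B^{\top}P - P^{\top}B,\, 0\right)\succeq 0$, which holds precisely because \eqref{eq:up_xi_expansion1} does.

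I expect no genuine obstacle here: the entire substance of the lemma is the exact pairwise cancellation in the congruence computation of the sufficiency direction. The single point that must be respected — and that guides both constructions — is that every inequality is non-strict, so I would deliberately avoid any matrix-inversion or Schur-complement step and rely instead on the congruence identity together with the trivial $\Xi = 0$ witness, both of which remain valid when $M - B^{\top}P - P^{\top}B$ is merely positive semidefinite.
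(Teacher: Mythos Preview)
Your proposal is correct and matches the paper's proof essentially line for line: the paper also establishes sufficiency by the congruence $\begin{bmatrix} I \\ B \end{bmatrix}^{\top}\mathcal{M}\begin{bmatrix} I \\ B \end{bmatrix} = M - B^{\top}P - P^{\top}B$, and necessity by the explicit choice $\Upsilon = P$, $\Xi = 0$. Your additional commentary on why Schur complements are avoided is sound but not needed for the argument.
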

\begin{proof}
	Since
	\begin{align*}
	&\begin{bmatrix}
	I \\ B
	\end{bmatrix}^{\top} 
	\begin{bmatrix}
	M - B^{\top}\Upsilon - \Upsilon^{\top} B \quad & \Upsilon^{\top} - P^{\top} - B^{\top}\Xi \\
	\star \quad& \Xi + \Xi^{\top}
	\end{bmatrix}
	\begin{bmatrix}
	I \\ B
	\end{bmatrix} \\
	&\quad= M - B^{\top}P - P^{\top}B ,
	\end{align*}
	it follows that \eqref{eq:up_xi_expansion2} leads to
	\eqref{eq:up_xi_expansion1}.
	On the other hand,
	\eqref{eq:up_xi_expansion2} with $\Upsilon = P$ and $\Xi = 0$
	is equivalent to \eqref{eq:up_xi_expansion1}.
	This completes the proof.
\end{proof}

The following lemma provides a sufficient condition 
for the product of Sobolev functions to belong to $H^1_0$.
\begin{lemma}
	\label{lem:H10}
	For a bounded open set $\Omega \subset \mathbb{R}^m$,
	if $f \in H^1(\Omega) \cap L^{\infty}(\Omega)$ with
	$\nabla f \in L^{\infty}(\Omega)^m$ and if $g \in H^1_0(\Omega)$,
	then $f g \in H^1_0(\Omega)$.
\end{lemma}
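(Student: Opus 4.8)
The plan is to realize $fg$ as an $H^1$-limit of functions that manifestly belong to $H^1_0(\Omega)$, and then to invoke the closedness of $H^1_0(\Omega)$ inside $H^1(\Omega)$. Since $g \in H^1_0(\Omega)$, by definition there is a sequence $g_j \in C^\infty_0(\Omega)$ with $g_j \to g$ in $H^1(\Omega)$, and the natural approximants are the products $fg_j$.

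First I would verify that each $fg_j$ lies in $H^1(\Omega)$ and compute its weak gradient. Because $g_j$ is smooth with compact support and $f \in H^1(\Omega)$, an elementary integration by parts against a test function $\phi \in C^\infty_0(\Omega)$ (using that $g_j\phi \in C^\infty_0(\Omega)$ and the definition of the weak derivative of $f$) yields the product rule
\[
\nabla(fg_j) = g_j\,\nabla f + f\,\nabla g_j .
\]
Here $fg_j \in L^2$ since $f \in L^\infty$ and $g_j$ is bounded with compact support, while both gradient terms lie in $L^2$ (one uses $\nabla f \in L^\infty$ together with $g_j$ compactly supported, and $f \in L^\infty$ together with $\nabla g_j \in L^2$). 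Note that at this stage only the smoothness of $g_j$ is exploited, so the full $W^{1,\infty}$ product rule is never needed.

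Next I would argue that each $fg_j$ in fact belongs to $H^1_0(\Omega)$. The key observation is that $\text{supp}(fg_j)\subset\text{supp}(g_j)$ is a compact subset of $\Omega$, so $fg_j$ is a compactly supported $H^1$ function. Extending it by zero to $\mathbb{R}^m$ produces an element of $H^1(\mathbb{R}^m)$, and mollifying with a standard mollifier $\eta_\varepsilon$ yields functions $\eta_\varepsilon * (fg_j) \in C^\infty_0(\Omega)$ (for $\varepsilon$ smaller than the distance from $\text{supp}(g_j)$ to $\partial\Omega$) that converge to $fg_j$ in $H^1$; hence $fg_j \in H^1_0(\Omega)$. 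This mollification step — the standard fact that a compactly supported $H^1$ function belongs to $H^1_0$ — is the main technical point of the argument.

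Finally I would pass to the limit. The $L^\infty$ hypotheses on $f$ and $\nabla f$ give the estimates
\[
\|f(g_j - g)\|_{L^2} \le \|f\|_{L^\infty}\|g_j - g\|_{L^2},\qquad
\|(g_j - g)\nabla f\|_{L^2} \le \|\nabla f\|_{L^\infty}\|g_j - g\|_{L^2},
\]
together with $\|f(\nabla g_j - \nabla g)\|_{L^2} \le \|f\|_{L^\infty}\|\nabla g_j - \nabla g\|_{L^2}$. Since $g_j \to g$ in $H^1$, these show that $fg_j \to fg$ in $L^2$ and $\nabla(fg_j) \to g\,\nabla f + f\,\nabla g$ in $L^2$; by closedness of the weak gradient this identifies $fg \in H^1(\Omega)$ with $\nabla(fg) = g\,\nabla f + f\,\nabla g$ and shows $fg_j \to fg$ in $H^1(\Omega)$. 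Because $H^1_0(\Omega)$ is closed in $H^1(\Omega)$ and each $fg_j \in H^1_0(\Omega)$, the limit $fg$ lies in $H^1_0(\Omega)$, which completes the proof.
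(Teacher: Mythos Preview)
Your proof is correct and uses the same core ingredients as the paper---approximating $g$ by $g_j\in C^\infty_0(\Omega)$, mollification, and the $L^\infty$ bounds on $f$ and $\nabla f$---but the organization is somewhat cleaner. The paper first establishes $fg\in H^1(\Omega)$ directly and then, for each $\epsilon>0$, constructs an explicit $h\in C^\infty_0(\Omega)$ with $\|h-fg\|_{H^1}<\epsilon$: it takes $g_\epsilon\in C^\infty_0(\Omega)$, separately mollifies $f$ on a neighborhood $U\supset\mathrm{supp}\,g_\epsilon$ to get $f_\epsilon\in C^\infty(U)$, sets $h=f_\epsilon g_\epsilon$, and then estimates $\|h-fg\|_{H^1}$ via a triangle inequality that involves both $f_\epsilon-f$ and $g_\epsilon-g$. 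You instead keep $f$ untouched, invoke the standard fact that the compactly supported $H^1$ function $fg_j$ already lies in $H^1_0(\Omega)$ (mollifying the product rather than $f$), and then only need the single limit $fg_j\to fg$ in $H^1$, which follows immediately from $g_j\to g$ in $H^1$ and the $L^\infty$ bounds. This avoids the double approximation and the associated cross-term estimates, at the cost of quoting the ``compact support $\Rightarrow H^1_0$'' lemma as a black box; the paper's version is more self-contained but longer.
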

\begin{proof}
	First we show $fg \in  H^1(\Omega)$. Since 
	$f \in H^1(\Omega) $ and $g \in H^1_0(\Omega)$,
	it follows that $fg$ possesses weak derivatives and $\nabla (fg) = 
	(\nabla f)g + f (\nabla g)$. Recall that for every $v,w \in \mathbb{R}^m$,
	\begin{equation}
	\label{eq:ele_inequality}
	\|v+w\|^2 \leq \big(\|v\|+\|w\|\big)^2\leq  2\big( \|v\|^2 + \|w\|^2 \big).
	\end{equation}
	Since
	$f \in L^{\infty}(\Omega)$ and $\nabla f \in L^{\infty}(\Omega)^m$
	yield
	\begin{align*}
	\int_\Omega |f(x)g(x)|^2 dx &\leq \|f\|_{L^{\infty}(\Omega)}^2 \cdot \|g\|^2_{H^1(\Omega)} \\
	 	\frac{1}{2}\!\int_\Omega \! \|\nabla(fg)(x)\|^2 dx &\leq \! 
	\int_{\Omega}\! \|\nabla f(x) g(x)\|^2
	 dx \!+\!\! \int_{\Omega} \!\|f(x) \nabla g(x)\|^2 dx \\
	 &\leq 
	 \Big(
	 \|f\|_{L^{\infty}(\Omega)}^2 + \|\nabla f\|_{L^{\infty}(\Omega)}^2
	 \Big) \|g\|^2_{H^1(\Omega)},
 	\end{align*}
 	it follows that $fg \in H^1(\Omega)$.
 	
 	To show $fg \in H^1_0(\Omega) = \overline{C^\infty_0(\Omega)}$, it is enough to prove that 
 	for every $\epsilon >0$, there exists $h \in C^{\infty}_0(\Omega)$ such that 
 	\begin{equation}
 	\label{eq:h_fg_diff}
 	\|h - fg\|_{H^1(\Omega)} < \epsilon.
 	\end{equation}
 	Fix $\epsilon >0$ arbitrarily. Since $g \in H^1_0(\Omega)$, 
	there exists $g_{\epsilon} \in C^{\infty}_0(\Omega)$ such that 
	\[
	\|g_\epsilon - g\|^2_{H^1(\Omega)} \leq \frac{\epsilon^2}{12\big(
		\|f\|_{L^{\infty}(\Omega)}^2 + \|\nabla f\|_{L^{\infty}(\Omega)}^2
		\big)  }.
	\]
	Choose an open set $U$ such that $\text{supp}~\!g_\epsilon \subset U \subset \overline{U} \subset \Omega$.
	Using a mollifier (see, e.g., Sec. 1.1.5 of \cite{Mazja1985}), we obtain
	$f_\epsilon \in C^{\infty}(U)$ satisfying
	\[
		\|f_\epsilon- f\|^2_{H^1(U)} \leq \frac{\epsilon^2}{12\big(
		\|g_{\epsilon}\|_{L^{\infty}(\Omega)}^2 + \|\nabla g_{\epsilon}\|_{L^{\infty}(\Omega)}^2
		\big)  }.
	\]
	We define 
	\[
	h(x) := 
	\begin{cases}
	f_{\epsilon}(x) g_{\epsilon}(x) & \text{if~} x \in U \\
	0 & \text{if~} x \in \Omega \setminus U.
	\end{cases}
	\]
	Since $\text{supp}~\!g_\epsilon \subset U$, it follows that $h \in C^{\infty}_0(\Omega)$.
	
	Finally, we are in a position to prove \eqref{eq:h_fg_diff}. 
	Using \eqref{eq:ele_inequality} again, we find that 
	\begin{align*}
	&\frac{1}{2}
	\int_\Omega |h(x) - f(x)g(x)|^2 dx  \\
	&\leq
	\int_U\big|\big(f_\epsilon(x) - f(x)\big)g_\epsilon(x)\big|^2 dx 
	+ \int_\Omega\big|f(x)\big(g_\epsilon(x) - g(x)\big)\big|^2 dx \\
	&\leq 
	\|f_\epsilon- f\|^2_{H^1(U)} \cdot \|g_{\epsilon}\|_{L^\infty(\Omega)}^2
	+
	\|f\|_{L^\infty(\Omega)}^2 \cdot \|g_\epsilon- g\|^2_{H^1(\Omega)} \\
	&< \epsilon^2/6.
	\end{align*}
	Similarly,
	we obtain
	\begin{align*}
	&
	\frac{1}{2}\int_\Omega
	\big\|
	\nabla h(x) - \nabla(fg_\epsilon)(x)
	\big\|^2dx \\
	&\leq 
	\int_U\!
	\big\|
	\nabla 
	(f_\epsilon \!-\! f)(x)
	g_\epsilon (x)
	\big\|^2dx 	\!+\!
	\int_U\!
	\big\| 
	\big(f_\epsilon (x)\!-\! f(x) \big)
	\nabla g_\epsilon (x)
	\big\|^2dx	\\
	&\leq	
	\|f_\epsilon- f\|^2_{H^1(U)} \cdot \|g_{\epsilon}\|_{L^\infty(\Omega)}^2
	+ \|f_\epsilon- f\|^2_{H^1(U)} \cdot \|\nabla g_{\epsilon}\|_{L^\infty(\Omega)}^2 \\
	&< \epsilon^2 / 12
	\end{align*}
	and 
	\begin{align*}
	&\frac{1}{2}\int_\Omega
	\big\|
	\nabla(fg_\epsilon)(x) -
	\nabla(fg)(x)
	\big\|^2dx 	\\
	&\leq
	\int_\Omega \!
	\big\|
	\nabla f (x)
	\big(g_\epsilon(x) \!-\! g(x)\big)
	\big\|^2dx	\!+\!
	\int_\Omega \!
	\big\|
	f (x) \nabla
	(g_\epsilon \!-\! g)(x)
	\big\|^2dx 	\\
	&\leq 
	\|\nabla f\|_{L^\infty(\Omega)}^2 \cdot 
	\|g_\epsilon- g\|^2_{H^1(\Omega)} +
	\| f\|_{L^\infty(\Omega)}^2 \cdot 
	\|g_\epsilon- g\|^2_{H^1(\Omega)} \\
	&< \epsilon^2 / 12.
	\end{align*}
	Therefore, 
\begin{align*}
	&\frac{1}{2}
\int_\Omega
\big\|
\nabla 
h(x)-\nabla(fg)(x)
\big\|^2dx \\
&\leq \!
	\int_\Omega \!
\big\|
\nabla h(x) \!-\! \nabla(fg_\epsilon)(x)
\big\|^2dx  \!+\! 
	\int_\Omega \!
\big\|
\nabla(fg_\epsilon)(x) \!-\!
\nabla(fg)(x)
\big\|^2dx 	\\
&\leq \epsilon^2/3.
\end{align*}	
It follows that \eqref{eq:h_fg_diff} holds. Thus, we obtain $fg \in H^1_0(\Omega)$.
\end{proof}

The next result provides the partial derivatives of the coefficients of 
simplices.
\begin{lemma}
	\label{lem:alpha_grad}
	For an $m$-simplex $\Omega \subset \mathbb{R}^m$,
	let $\xi_0,\dots,\xi_m \in \mathbb{R}^m$ be its vertices and  
	$x \in \Omega$ be represented as
	%Assume that 
	%$\alpha_{0}(x),\dots,\alpha_m(x)$ are nonnegative and satisfy
	\begin{equation}
	\label{eq:x_alpha_rep}
	x = \sum_{\ell=0}^m \alpha_{\ell}(x) \xi_{\ell},
	\end{equation}
	where the coefficients $\alpha_{0}(x),\dots,\alpha_m(x)$ are nonnegative and satisfy
	\begin{equation}
	\label{eq:alpha_property}
	\sum_{\ell=0}^m \alpha_{\ell}(x)= 1.
	\end{equation}
	Then, 
	the coefficients $\alpha_0(x),\cdots,\alpha_m(x)$ are continuous
	on $\Omega$. Furthermore,
	for every $x \in \Omega^i$,
	\begin{subequations}
		\label{eq:nabla_alpha_all}
		\begin{align}
		\label{eq:nabla_alpha_posi}
		\nabla \alpha_{\ell}(x) &= v_\ell\qquad \forall \ell = 1,\dots,m \\
		\label{eq:nabla_alpha_zero}
		\nabla \alpha_0(x) &= -\sum_{\ell=1}^{m} v_{\ell},
		\end{align}
		where 
		\begin{equation}
		\label{eq:v_def}
		%\begin{bmatrix}
		%x_{p(k,0)}^{(1)} - x_{p(k,m)}^{(1)} & \cdots & x_{p(k,0)}^{(m)} -  x_{p(k,m)}^{(m)} \\
		%\vdots & & \vdots \\
		%x_{k(m-1)}^{(1)} - x_{p(k,m)}^{(1)}  & \cdots & x_{k(m-1)}^{(m)} - x_{p(k,m)}^{(m)} 
		%\end{bmatrix}
		v_{\ell} :=
		\begin{bmatrix}
		\xi_{1}^{\top} - \xi_{0}^{\top} \\
		\vdots \\
		\xi_{m}^{\top} - \xi_{0}^{\top}
		\end{bmatrix}^{-1} e_{\ell}\qquad \forall \ell = 1,\dots,m.
		\end{equation}
	\end{subequations}
\end{lemma}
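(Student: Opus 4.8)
The plan is to exploit the fact that the barycentric coordinates $\alpha_0,\dots,\alpha_m$ are affine functions of $x$, so that both the continuity claim and the gradient formulas \eqref{eq:nabla_alpha_all} reduce to reading off an explicit linear expression for each $\alpha_\ell$. Concretely, I would first derive a closed-form formula $\alpha_\ell(x) = v_\ell^\top(x-\xi_0)$ for $\ell=1,\dots,m$ and then obtain $\alpha_0$ from the normalization \eqref{eq:alpha_property}.

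First I would use \eqref{eq:alpha_property} to eliminate $\alpha_0$, substituting $\alpha_0(x) = 1 - \sum_{\ell=1}^m \alpha_\ell(x)$ into the representation \eqref{eq:x_alpha_rep}. Collecting terms yields
\[
x - \xi_0 = \sum_{\ell=1}^m \alpha_\ell(x)\,(\xi_\ell - \xi_0),
\]
so that $x-\xi_0$ is expressed through the edge vectors $\xi_1-\xi_0,\dots,\xi_m-\xi_0$. Writing $W$ for the matrix in \eqref{eq:v_def} whose $\ell$-th row is $\xi_\ell^\top - \xi_0^\top$, this identity reads $x - \xi_0 = W^\top \big[\alpha_1(x)\ \cdots\ \alpha_m(x)\big]^\top$ in matrix form.

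Next I would invoke the nondegeneracy of the $m$-simplex: its vertices are affinely independent, hence the edge vectors are linearly independent and $W$ (equivalently $W^\top$) is invertible. Solving the linear system gives $\big[\alpha_1(x)\ \cdots\ \alpha_m(x)\big]^\top = (W^{-1})^\top (x-\xi_0)$, and reading off the $\ell$-th component using $v_\ell = W^{-1}e_\ell$ from \eqref{eq:v_def} produces the affine formula $\alpha_\ell(x) = v_\ell^\top(x-\xi_0)$. This immediately shows each $\alpha_\ell$ is continuous on $\Omega$ and has the constant gradient $\nabla \alpha_\ell(x) = v_\ell$ for $x \in \Omega^i$, establishing \eqref{eq:nabla_alpha_posi}; continuity of $\alpha_0$ and the formula \eqref{eq:nabla_alpha_zero} then follow at once from $\alpha_0 = 1 - \sum_{\ell=1}^m \alpha_\ell$ and linearity of the gradient.

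The only real subtlety is the transpose bookkeeping. The definition \eqref{eq:v_def} uses the matrix $W$ with the edge vectors as \emph{rows}, whereas the substitution above naturally produces the matrix $W^\top$ with the edge vectors as \emph{columns}; one must therefore track carefully that the $\ell$-th coordinate of $(W^{-1})^\top(x-\xi_0)$ equals $(W^{-1}e_\ell)^\top(x-\xi_0) = v_\ell^\top(x-\xi_0)$. Once this correspondence is pinned down, and once the invertibility of $W$ is justified from the simplex being nondegenerate, the remainder is routine.
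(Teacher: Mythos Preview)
Your proposal is correct and follows essentially the same route as the paper: eliminate $\alpha_0$ via \eqref{eq:alpha_property}, write $x-\xi_0$ as a linear combination of the edge vectors, invert the resulting matrix (invertible by affine independence of the simplex vertices), and read off the affine formula for each $\alpha_\ell$. The paper works with the matrix $D$ having the edge vectors as columns, whereas you work with its transpose $W$, but the transpose bookkeeping you flag is exactly the identification the paper makes when noting that $\nabla\alpha_\ell$ is the transpose of the $\ell$-th row of $D^{-1}$.
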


%%%%%%%%%%%%%%%%%%%%%%
\newcounter{mytempeqncnt2}
\begin{figure*}[t]
	% ensure that we have normalsize text
	\normalsize
	% Store the current equation number.
	\setcounter{mytempeqncnt2}{\value{equation}}
	% Set the equation number to one less than the one
	% desired for the first equation here.
	% The value here will have to changed if equations
	% are added or removed prior to the place these
	% equations are referenced in the main text.
	\setcounter{equation}{0}
	\begin{equation}
	\tag{$\sf A$}　\label{eq:LMI_1D}
	\begin{bmatrix}
	\Lambda  -  \sigma_{k,\ell} I_n  -  B_{p(k,\ell)}^{\top}\Upsilon_k  -  
	\Upsilon_k^{\top}B_{p(k,\ell)} \quad &
	\Upsilon_k^{\top}  -  P_{p(k,\ell)}  -  B_{p(k,\ell)}^{\top}\Xi_k \quad &  \rho_k P_{p(k,\ell)} & 
	\sum_{r = 0}^m (P_{p(k,r)}A)  \otimes   v_{p(k,r)}^{\top} \\
	\star \quad& \Xi_k+\Xi_k^{\top} \quad& 0 & 0 \\
	\star \quad& \star \quad& \sigma_{k,\ell} I_n & 0 \\
	\star \quad& \star \quad& \star & (A^{\top}P_{p(k,\ell)} + P_{p(k,\ell)}A  -  c^2\Lambda)  \otimes  I_m
	\end{bmatrix}  \succ  0
	\end{equation}
	% Restore the current equation number.
	\setcounter{equation}{\value{mytempeqncnt2}}
	% IEEE uses as a separator
	% The spacer can be tweaked to stop underfull vboxes.
	%		\vspace{-10pt}
	\hrulefill
\end{figure*}

\begin{proof}
	By \eqref{eq:x_alpha_rep} and \eqref{eq:alpha_property},
	\[
	x = \left(1 - \sum_{\ell=1}^{m} 
	\alpha_\ell(x) \right)\xi_0 + \alpha_1(x)\xi_1 + \cdots + \alpha_m(x)\xi_m.
	\]
	Therefore,
	\begin{equation*}
	%	\label{eq:x_M}
	x - \xi_0 = 
	D
	\begin{bmatrix}
	\alpha_1(x) \\ \vdots \\ \alpha_m(x)
	\end{bmatrix},
	\end{equation*}
	where the matrix $D \in \mathbb{R}^{m \times m}$ is defined by
	\[
	D := 
	\begin{bmatrix}
	\xi_{1}- \xi_{0} & \cdots &
	\xi_{m} - \xi_{0} 
	\end{bmatrix}.
	\]
	Since $\Omega$ is an $m$-simplex, the matrix $D$ is invertible and
	\begin{equation}
	\label{eq:alpha_x_bounded}
	\begin{bmatrix}
	\alpha_1(x) \\ \vdots \\ \alpha_m(x)
	\end{bmatrix}	
	=
	D^{-1}
	(x-\xi_0).
	\end{equation}
	Hence
	$\alpha_1(x),\cdots,\alpha_m(x)$ are continuous on $\Omega$.
	By \eqref{eq:alpha_property}, 
	$\alpha_0(x) = 1 - \sum_{\ell=1}^m 
	\alpha_{\ell}(x)$ is also continuous on $\Omega$.
	
	Next, we investigate the gradients $\nabla \alpha_0,\dots,\nabla \alpha_m$.
	Choose $\ell=1,\dots,m$ arbitrarily.
	Since $\alpha_{\ell}(x) = e_{\ell}^{\top} D^{-1}(x-\xi_0)$ by \eqref{eq:alpha_x_bounded},
	it follows that 
	\begin{equation}
	\label{eq:nabla_alpha}
	\nabla \alpha_\ell(x) = 
	\begin{bmatrix}
	e_{\ell}^{\top}D^{-1} e_1 \\
	\vdots \\
	e_{\ell}^{\top}D^{-1} e_m
	\end{bmatrix}\qquad \forall x \in \Omega^i.
	\end{equation}
	The vector of the right-hand side of \eqref{eq:nabla_alpha}
	is equal to the transpose of the $\ell$th row vector of $D^{-1}$,
	namely, the vector $v_\ell$ defined by \eqref{eq:v_def}.
	Thus, we obtain \eqref{eq:nabla_alpha_posi}.
	Moreover, since
	\eqref{eq:alpha_property} leads to
	\[
	\nabla \alpha_0(x) = -\sum_{\ell=1}^m \nabla \alpha_{\ell}(x)\qquad \forall x \in \Omega^i,
	\]
	it follows that \eqref{eq:nabla_alpha_zero} holds. This completes the proof.
\end{proof}

For every $k=1,\dots,N$ and every $\ell=1,\dots,m$,
define
\[
%\begin{bmatrix}
%x_{p(k,0)}^{(1)} - x_{p(k,m)}^{(1)} & \cdots & x_{p(k,0)}^{(m)} -  x_{p(k,m)}^{(m)} \\
%\vdots & & \vdots \\
%x_{k(m-1)}^{(1)} - x_{p(k,m)}^{(1)}  & \cdots & x_{k(m-1)}^{(m)} - x_{p(k,m)}^{(m)} 
%\end{bmatrix}
v_{p(k,\ell)} :=
\begin{bmatrix}
\xi_{p(k,1)}^{\top} - \xi_{p(k,0)}^{\top} \\
\vdots \\
\xi_{p(k,m)}^{\top} - \xi_{p(k,0)}^{\top}
\end{bmatrix}^{-1} e_{\ell},~~
v_{p(k,0)} := -\sum_{\ell=1}^m v_{p(k,\ell)}.
\]
We are in a position to state the second main result.
\begin{theorem}
	\label{thm:piecewise_linear}
	Let Assumptions~\ref{assum:A}, \ref{assum:polytopic_set}, and \ref{assum:polytopic_set2} hold.
	The coupled parabolic system \eqref{eq:coupled_PDE}
	is exponentially stable if 
	there exist positive definite matrices $P_1,\dots,P_{N_0} \in \mathbb{R}^{n \times n}$,
	a positive definite diagonal matrix $\Lambda\in \mathbb{R}^{n \times n}$,
	and positive scalars $\sigma_{k,\ell}$ ($k=1,\dots,N$, $\ell=0,\dots,m$) such that
	the LMIs in \eqref{eq:LMI_1D}, where $c >0$ is a constant of the 
	Poincar\'e-Friedrichs' inequality \eqref{eq:poincare_ineq},
	are feasible for all $k = 1,\dots,N$ and $\ell=0,\dots,m$.
	%\small
	%	\begin{align}
	%	\label{eq:LMI_1D}
	%	\begin{bmatrix}
	%	\Lambda - \sigma_{k,\ell} I_n- B_{p(k,\ell)}^{\top}\Upsilon_k - 
	%	\Upsilon_k^{\top}B_{p(k,\ell)} &
	%	\Upsilon_k^{\top} - P_{p(k,\ell)} - B_{p(k,\ell)}^{\top}\Xi_k &  \rho_k P_{p(k,\ell)} & 
	%	\sum_{r = 0}^m P_{p(k,r)}A \otimes v_{p(k,r)}^{\top} \\
	%	\star & \Xi_k+\Xi_k^{\top} & 0 & 0 \\
	%	\star & 0 & \sigma_{k,\ell} I_n & 0 \\
	%	\star & \star & \star & (A^{\top}P_{p(k,\ell)} + P_{p(k,\ell)}A - c^2\Lambda) \otimes I_m
	%	\end{bmatrix} &\succ 0,
	%	\end{align}
	%	\normalsize
	%		\begin{equation}
	%		\scalebox{0.93}{$\displaystyle
	%			\left[
	%			\begin{array}{cccc}
	%			\Lambda \!-\! \sigma_{k,\ell} I_n \!-\! B_{p(k,\ell)}^{\top}\Upsilon_k \!-\! 
	%			\Upsilon_k^{\top}B_{p(k,\ell)} &
	%			\Upsilon_k^{\top} \!-\! P_{p(k,\ell)} \!-\! B_{p(k,\ell)}^{\top}\Xi_k &  \rho_k P_{p(k,\ell)} & 
	%			\sum_{r = 0}^m P_{p(k,r)}A \!\otimes \! v_{p(k,r)}^{\top} \\
	%			\star & \Xi_k+\Xi_k^{\top} & 0 & 0 \\
	%			\star & \star & \sigma_{k,\ell} I_n & 0 \\
	%			\star & \star & \star & (A^{\top}P_{p(k,\ell)} + P_{p(k,\ell)}A \!-\! c^2\Lambda) \!\otimes\! I_m
	%			\end{array}
	%			\right] \!\succ\! 0,$} 
	%		\label{eq:LMI_1D}
	%		\end{equation}
\end{theorem}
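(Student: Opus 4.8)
The plan is to follow the three-step structure of the proof of Theorem~\ref{thm:stability}, now with the space-dependent weight
\[
P(x) := \sum_{\ell=0}^m \alpha_{p(k,\ell)}(x)\, P_{p(k,\ell)}, \qquad x \in \Omega_k ,
\]
in the Lyapunov function $V(z) = \int_\Omega z(x)^\top P(x) z(x)\,dx$. First I would check that $P$ is a well-defined, continuous, piecewise-linear matrix function on $\overline\Omega$: Lemma~\ref{lem:alpha_grad} gives the continuity of the barycentric coordinates $\alpha_{p(k,\ell)}$, and Assumption~\ref{assum:polytopic_set}(ii) guarantees that two simplices sharing a face assign matching values there, so $P$ is continuous across faces. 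Hence each entry of $P$ lies in $H^1(\Omega)\cap L^\infty(\Omega)$ with gradient in $L^\infty(\Omega)^m$, and Lemma~\ref{lem:H10}, applied entrywise, yields $P(\cdot)z(t) \in H^1_0(\Omega)^n$ for a.e.\ $t$. This is exactly what is needed to take $v = P(\cdot)z(t)$ in condition~1) of Definition~\ref{defn:weak_solution}.

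Arguing as in the proof of (ii) of Theorem~\ref{thm:z_continuity}, but with the bounded symmetric weight $P(x)$, I would obtain that $V(t)$ is absolutely continuous with $\frac{dV}{dt}(t) = 2\langle \frac{dz}{dt}(t),\, P(\cdot)z(t)\rangle$ a.e., and then substitute condition~1). The new feature compared with Theorem~\ref{thm:stability} is that $\nabla(P(\cdot)z) = (P(x)\otimes I_m)\nabla z$ plus a term containing $\nabla P$; by Lemma~\ref{lem:alpha_grad} this gradient is constant on each simplex interior and is assembled from the vectors $v_{p(k,r)}$. Carrying this through, $\frac{dV}{dt}(t)$ becomes an integral over $\Omega$ of a quadratic form in $[z;\nabla z]$ whose matrix, compared with that of Theorem~\ref{thm:stability}, carries an additional cross term between $z$ and $\nabla z$; its coefficient is precisely the constant $(1,4)$ block $\sum_{r=0}^m (P_{p(k,r)}A)\otimes v_{p(k,r)}^\top$ of \eqref{eq:LMI_1D}. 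After adding the nonnegative Poincar\'e term supplied by Corollary~\ref{coro:poincare}, it remains to force this quadratic form below $-\epsilon\|z\|_{L^2}^2$ a.e.

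The crux, and the step I expect to be the main obstacle, is to deduce this pointwise bound from feasibility of \eqref{eq:LMI_1D}, because on $\Omega_k$ both $B(x)$ and $P(x)$ vary while the product term $B^\top P + PB$ must be kept affine in the data. Here the two lemmas do the work. Writing $P(x)$ and the approximant $\sum_\ell \alpha_{p(k,\ell)}(x)B_{p(k,\ell)}$ as convex combinations of the vertex matrices, I would use Lemma~\ref{lem:LMI_formula} to replace $B^\top P + PB$ by an expression that is jointly affine in $B$ and $P$ through the common slack variables $\Upsilon_k, \Xi_k$; consequently the convex combination over $\ell = 0,\dots,m$ of the vertex LMIs \eqref{eq:LMI_1D} (with the constant $(1,4)$ block reproduced unchanged) is a valid LMI at $x$. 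The approximation error $B(x)-\sum_\ell \alpha_{p(k,\ell)}(x)B_{p(k,\ell)}$, bounded by $\rho_k$ via Assumption~\ref{assum:polytopic_set2}, is then absorbed exactly as in Step~2 of Theorem~\ref{thm:stability}: write it as $\rho_k\Phi_k(x)$ with $\|\Phi_k(x)\|\le 1$ and use the $\sigma_{k,\ell}$ slack in an $S$-procedure-type congruence. A single congruence transformation that eliminates the two slack blocks (the one from Lemma~\ref{lem:LMI_formula} and the one carrying $\Phi_k$) then recovers the desired matrix inequality on the $[z;\nabla z]$ coordinates for a.e.\ $x\in\Omega_k$.

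Finally, once $\frac{dV}{dt}(t) \le -\epsilon\|z(t)\|_{L^2}^2$ holds, the conclusion is identical to Step~3 of Theorem~\ref{thm:stability}. Since each $P_p\succ 0$ and $P(x)$ is a convex combination, $P(x)$ admits uniform eigenvalue bounds $0<\delta_{\min}\le\delta_{\max}$ independent of $x$; hence $\frac{dV}{dt}\le -2\gamma V$ with $\gamma = \epsilon/(2\delta_{\max})$, and Gronwall's inequality yields the exponential decay of $\|z(t)\|_{L^2}$. The main difficulty throughout is the bookkeeping in the third paragraph: simultaneously decoupling the $x$-dependence of $B$ and $P$ via Lemma~\ref{lem:LMI_formula}, absorbing the error $\rho_k$, and keeping track of the constant cross-term block, while verifying that one congruence produces the correct pointwise inequality over the entire simplex.
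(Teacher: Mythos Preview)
Your proposal is correct and follows essentially the same route as the paper: piecewise-linear $P$ via barycentric coordinates, Lemmas~\ref{lem:H10} and~\ref{lem:alpha_grad} to secure $Pz(t)\in H^1_0(\Omega)^n$ and compute $\nabla(Pz)$, convex combination of the vertex LMIs combined with Lemma~\ref{lem:LMI_formula} to handle the bilinear term ${\bf B}_k^\top{\bf P}_k+{\bf P}_k{\bf B}_k$, the $\rho_k\Phi_k$ congruence for the approximation error, Corollary~\ref{coro:poincare}, and the Gronwall conclusion. The only cosmetic difference is that where you speak of ``a single congruence transformation that eliminates the two slack blocks,'' the paper carries this out as two Schur complements sandwiching one application of Lemma~\ref{lem:LMI_formula}, followed by the $[I_n;\,-\Phi_k(x);\,I_n]$ congruence; the content is the same.
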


\begin{proof}	
	1.	
	Using $P_{p(k,\ell)}$, $B_{p(k,\ell)}$, and $\sigma_{k,\ell}$ in the LMIs in \eqref{eq:LMI_1D},
	we define the functions 
	${\bf P}_k$, ${\bf B}_k$, and ${\bm \sigma}_k$ on $\Omega_k$ by
	\begin{align*}
	{\bf P}_k(x) &:= \sum_{\ell=0}^m
	\alpha_{p(k,\ell)}(x)P_{p(k,\ell)} \\
	{\bf B}_k(x) &:= \sum_{\ell=0}^m
	\alpha_{p(k,\ell)}(x)B_{p(k,\ell)} \\
	{\bm \sigma}_k(x) &:=
	\sum_{\ell=0}^m
	\alpha_{p(k,\ell)}(x)\sigma_{k,\ell}
	\end{align*}
	for every $k=1,\dots,N$,
	where 
	the coefficients $\alpha_{p(k,0)},\dots, \alpha_{p(k,m)}$ are given as in \eqref{eq:alpha_simplex}.
	First we show that 
	the weak solution $z(t)$ of the parabolic PDE \eqref{eq:coupled_PDE}
	satisfies 
	$Pz(t) \in H^1_0(\Omega)^n$ for a.e. $t \in [0,T]$, where
	$P: \overline{\Omega} \to \mathbb{R}^{n \times n}$ is defined by
	\begin{equation}
	\label{eq:P_def}
	P(x) := {\bf P}_k(x) \qquad 
	\forall x \in \Omega_k,~\forall k = 1,\dots,N.
	\end{equation}
	
	To this end, we need to see that the values of ${\bf P}_j$ and ${\bf P}_k$ with 
	$\Omega_j \cap \Omega_k \not= \emptyset$ are not different
	on the intersection $\Omega_j \cap \Omega_k$.
	For every $j,k=1,\dots,N$ with 
	$\Omega_j \cap \Omega_k \not= \emptyset$,
	let 
	\[\xi_{q(0)},\dots,\xi_{q(m_0)} \in \{\xi_{p(j,\ell)}\}_{\ell=0}^{m} \cap \{\xi_{p(k,\ell)}\}_{\ell=0}^{m}
	\]
	be
	the vertices of the face $\Omega_j \cap \Omega_k$,
	which is guaranteed by (ii) of Assumption~\ref{assum:polytopic_set}.
	Then 
	for every $x \in \Omega_j \cap \Omega_k$,
	there exist
	\[
	\alpha_{q(0)}(x),\dots,\alpha_{q(m_0)}(x) \geq 0
	\text{~with~}
	\sum_{\ell=0}^{m_0} \alpha_{q(\ell)}(x) = 1,
	\] 
	such that 
	\[
	\sum_{\ell=0}^m
	\alpha_{p(j,\ell)}(x)P_{p(j,\ell)} 	=
		\sum_{\ell=0}^{m_0}
	\alpha_{q(\ell)}(x)P_{q(\ell)}=
	\sum_{\ell=0}^m
	\alpha_{p(k,\ell)}(x)P_{p(k,\ell)} .
	\]
	Hence, the values of $P$ are consistent on the boundaries.
	By Lemma~\ref{lem:alpha_grad},
	the coefficients $\alpha_{p(k,0)}(x),\dots,\alpha_{p(k,m)}(x)$ are 
	continuous in $\Omega_k$, which implies that 
	${\bf P}_k$ is continuous in $\Omega_k$ for every $k =1,\dots,N$.
	Thus $P$ is continuous in $\overline{\Omega}$. Furthermore,
	Lemma~\ref{lem:alpha_grad} shows that
	$\nabla \alpha_{p(k,\ell)}$ is constant for every $k=1,\dots,N$ and
	every $\ell = 0,\dots,m$.
	The restriction of each element of $P$ to every line parallel 
	to the coordinate directions is continuous piecewise linear and hence
	absolutely continuous. 
	Thus,
	every element of $P$ belongs to $H^1(\Omega)$
	by Theorem 2 in Sec.~1.1.3 of \cite{Mazja1985}, which is called the
	absolutely continuous on lines (ACL) characterization of Sobolev functions.
	Since $z(t) \in H^1_0(\Omega)^n$ for a.e. $t \in [0,T]$,
	it follows from Lemma~\ref{lem:H10} that
	$Pz(t) \in H^1_0(\Omega)^n$ for a.e. $t \in [0,T]$. 
	
%	On the other hand, since $P \in L^{\infty}(\Omega)^{n \times n}$ and since
%	\begin{equation}
%	\label{eq:nabla_P}
%	\nabla({\bf P}_kz)= 
%	\sum_{\ell=0}^{m} 
%	\left( P_{p(k,\ell)}\otimes \nabla \alpha_{p(k,\ell)} \right) z + 
%	({\bf P}_k\otimes I_m) \nabla z
%	\end{equation}
%	for all $z \in H^1_0(\Omega_k)^n$ and for all $k =1,\dots,N$.
%	it follows that $Pz, \nabla(Pz) \in L^2(\Omega)^n$ for 
%	every $z \in H^1_0(\Omega)^n$.
%	Thus, since $z(t) \in H^1_0(\Omega)^n$ for a.e. $t \in [0,T]$,
%	it follows that
%	$Pz(t) \in H^1_0(\Omega)^n$ for a.e. $t \in [0,T]$
%	from Theorem 2 on p. 9 in \cite{Mazja1985}, which is called the
%	absolutely continuous on lines (ACL) characterization of Sobolev functions.

	2. 
	For the function $P(x)$ defined by \eqref{eq:P_def} with
	positive definite matrices $P_1,\dots,P_{N_0} \in \mathbb{R}^{n \times n}$,
	we set
	\[
		V(z) := (z,Pz)_{L^2} \qquad \forall z \in L^2(\Omega)^n.
	\]
	Similarly to Theorem~\ref{thm:z_continuity},
	the Lyapunov function 
	$V(t) := V\big(z(t) \big)
	$ with the weak solution $z$ of the PDE \eqref{eq:coupled_PDE} is absolutely continuous on $[0,T]$ and 
	\[
	\frac{dV}{dt} (t)= 2 
	\left \langle 
	\frac{dz}{dt}(t) , Pz(t)
	\right\rangle\qquad \text{a.e.~} t \in [0,T].
	\]
	A routine calculation shows that 
		\begin{align}
	\nabla(Pz)(x) &= 
	\sum_{r=0}^{m} 
	\left( P_{p(k,r)} \otimes \nabla \alpha_{p(k,r)}(x) \right) z(x) 		\label{eq:nabla_P}\\
	&\quad + 
	({\bf P}_k(x) \otimes  I_m) \nabla z(x) 
	~~\text{a.e.~} x \in \Omega_k,~\forall k =1,\dots,N \notag 
	\end{align}
	for all $z \in H^1_0(\Omega)^n$.
	Since
	the Lebesgue measure of the boundary $\partial \Omega_k$ is zero 
	for every $k=1,\dots,N$,
	Lemma~\ref{lem:alpha_grad} and \eqref{eq:nabla_P} yield
	\begin{align*} 
	&2\Big(\big(A \otimes I_m) \nabla z(t), \nabla (Pz(t)\big)\Big)_{L^2} \\
	&=2\sum_{k=1}^N \int_{\Omega_k}
	\big((A \otimes I_m) \nabla z(x,t) \big)^{\top}
	\nabla \big(P(x) z(x,t)\big) dx \\
	&=
	\sum_{k=1}^N \int_{\Omega_k} 
	\begin{bmatrix}
	z(x,t) \\ \nabla z(x,t)
	\end{bmatrix}^{\top} \\
	&~~\quad \times
	\begin{bmatrix}
	0 & \sum_{r=0}^m \big((P_{p(k,r)} A) \otimes v_{p(k,r)}^{\top}\big) \\
	\star & (A^{\top}  {\bf P}_k (x)  + {\bf P}_k (x)  A) \otimes I_m
	\end{bmatrix} 
	\begin{bmatrix}
	z(x,t) \\ \nabla z(x,t)
	\end{bmatrix}
	dx.
	%	&=\sum_{k=1}^{N}\sum_{\ell=0}^{m}
	%	\left(\nabla z(t), (A^{\top} P_{p(k,\ell)} \otimes v_{p(k,\ell)}) 
	%	z(t)\right)_{L^2} +\sum_{k=1}^{N}
	%	\left(\nabla z(t), (A^{\top} {\bf P}_k \otimes I_m)  \nabla z(t)\right)_{L^2}  
	\end{align*}
	Moreover, \eqref{eq:B_bound2} guarantees that 
	for every $k=1,\dots,N$,
	there exists a measurable function $\Phi_k :\Omega_k \to \mathbb{R}^{n \times n}$ such that
	for a.e. $x \in \Omega_k$,
	\begin{subequations}
		\begin{align}
		B(x) - {\bf B}_k(x) &= \rho_k \Phi_k(x) 
		\label{eq:B_PWL}\\
		\|\Phi_k(x)\| &\leq 1.
		\label{eq:B_PWL_error}
		\end{align}
	\end{subequations}
	It follows from the condition 1) in Definition~\ref{defn:weak_solution} that
	for a.e. $t \in [0,T]$, 
	the Lyapunov function 
	$V(t)$ satisfies
	\begin{align}
	\label{eq:V_deriv_polytopic_case}
	&\frac{dV}{dt}(t) =
	\sum_{k=1}^{N}
	\int_{\Omega_k}
	\begin{bmatrix}
	z(x,t) \\ \nabla z(x,t)
	\end{bmatrix}^{\top} M_{k}(x)
	\begin{bmatrix}
	z(x,t) \\ \nabla z(x,t)
	\end{bmatrix}	dx.
	\end{align}
	Here we defined 
	$M_k: \Omega_k \to \mathbb{R}^{(m+1)n \times (m+1)n}$
	by
	\begin{align*}
	M_{k}(x) &:=
	\begin{bmatrix}
	M_k^{(1)}(x) \quad& M_k^{(2)}\\
	\star \quad& M_k^{(3)}(x)
	\end{bmatrix}
	\end{align*}
	with 
	\begin{align*}
	M_k^{(1)}(x) &:= \big({\bf B}_{k}(x) + \rho_k\Phi_k(x)\big)^{\top}{\bf P}_k(x)  \\
	&\qquad + {\bf P}_{k}(x)\big({\bf B}_{k}(x) + \rho_k\Phi_k(x)\big) \\
	M_k^{(2)} &:= -\sum_{r=0}^{m} (P_{p(k,r)}A) \otimes v_{p(k,r)}^{\top} \\
	M_k^{(3)}(x) &:= -(A^{\top}{\bf P}_k(x) \!+\! {\bf P}_k(x) A) \otimes I_m.
	\end{align*}
	%	\small
	%	\[
	%	M_{k}(x) := 	
	%	\begin{bmatrix}
	%	({\bf B}_{k}(x) + \rho_k\Phi_k(x))^{\top}{\bf P}_k(x)  + {\bf P}_{k}(x)({\bf B}_{k}(x) + \rho_k\Phi_k(x)) & -	\sum_{r=0}^{m}A^{\top} P_{p(k,r)} \otimes v_{p(k,r)} \\
	%	\star & -(A^{\top}{\bf P}_k(x) + {\bf P}_k(x) A) \otimes I_m
	%	\end{bmatrix}.
	%	\]
	%   \normalsize
	
	3. 
	Let $\epsilon > 0 $.
	For every $k=1,\dots,N$, define 
	$G_k: \Omega_k \to \mathbb{R}^{(m+2)n \times (m+2)n}$ by
	\begin{align*}
	G_{k}(x) &:=
	\begin{bmatrix}
	G_k^{(1)}(x)  \quad&G_k^{(2)}(x)  \\
	\star \quad& G_k^{(3)}(x) 
	\end{bmatrix},
	\end{align*}
	where
	\begin{align*}
	G_k^{(1)}(x) &:= \Lambda  -  ({\bm \sigma}_k(x)  + \epsilon) I_n  \\
	&\qquad \qquad -  {\bf B}_{k}(x)^{\top}{\bf P}_k(x) 
	\!-\!  {\bf P}_{k}(x){\bf B}_{k}(x)  \\
	G_k^{(2)}(x) &:=	
	\begin{bmatrix}
	\rho_k {\bf P}_k (x) \quad &
	\sum_{r=0}^{m}(P_{p(k,r)}A)   \otimes  v_{p(k,r)}^{\top} 
	\end{bmatrix} \\
	G_k^{(3)}(x) &:=	
	\begin{bmatrix}
	{\bm \sigma}_k(x) I_n & 0 \\
	\star & (A^{\top}{\bf P}_k(x)  \!+\!  {\bf P}_k(x) A \!-\!  c^2\Lambda )  \!\otimes\!   I_m
	\end{bmatrix}\!.
	\end{align*}
	%	\small
	%	\[
	%	G_k(x) := 	
	%	\begin{bmatrix}
	%	\Lambda \!-\! ({\bm \sigma}_k(x) \!+\!\epsilon) I_n \!-\! {\bf B}_{k}(x)^{\top}{\bf P}_k(x) 
	%	\!-\! {\bf P}_{k}(x){\bf B}_{k}(x) &\rho_k {\bf P}_k (x)
	%	& \sum_{r=0}^{m}A^{\top} P_{p(k,r)} \!\otimes\! v_{p(k,r)} \\
	%	\star & {\bm \sigma}_k(x) I_n & 0 \\
	%	\star & \star & (A^{\top}{\bf P}_k(x) \!+\! {\bf P}_k(x) A\!-\! c^2\Lambda ) \!\otimes \! I_m
	%	\end{bmatrix}.
	%	\]
	%	\normalsize
	We now show 
	that if the LMIs \eqref{eq:LMI_1D} are feasible for
	all $k=1,\dots,N$ and for all $\ell=0,\dots,m$, then
	there exists $\epsilon >0$
	such that $G_k(x) \succeq 0$ for every $x \in \Omega_k$
	and every $k=1,\dots,N$.
	Define 
	\begin{align*}
	\Theta^{(1,1)}_{k,\ell}  &:= \Lambda - (\sigma_{k,\ell}+\epsilon) I_n- B_{p(k,\ell)}^{\top}\Upsilon_k - \Upsilon_k^{\top}B_{p(k,\ell)}  \\
	\Theta^{(1)}_{k,\ell} &:= 
	\begin{bmatrix}
	%		\Lambda - (\sigma_{k,\ell}+\epsilon) I_n- B_{p(k,\ell)}^{\top}\Upsilon_k - \Upsilon_k^{\top}B_{p(k,\ell)} 
	\Theta^{(1,1)}_{k,\ell}\quad &
	\Upsilon_k^{\top} - P_{p(k,\ell)} - B_{p(k,\ell)}^{\top}\Xi_k \\
	\star \quad& \Xi_k + \Xi_k^{\top}
	\end{bmatrix} \\
	\Theta^{(2)}_{k,\ell} &:= 
	\begin{bmatrix}
	\rho_k P_{p(k,\ell)} \quad& 
	\sum_{r = 0}^m (P_{p(k,r)}A) \otimes v_{p(k,r)}^{\top} \\
	0 \quad & 0 
	\end{bmatrix} \\
	\Theta^{(3)}_{k,\ell} &:= 
	\begin{bmatrix}
	\sigma_{k,\ell} I_n & 0 \\
	0 & (A^{\top}P_{p(k,\ell)} + P_{p(k,\ell)}A - c^2\Lambda) \otimes I_m
	\end{bmatrix}.
	\end{align*}
	For every $x \in \Omega_k$ and every $k=1,\dots,N$, we obtain
	\begin{align*}
	\Theta^{(1,1)}_{k}(x) &:=
	\Lambda \!-\! ({\bm \sigma}_{k}(x)\!+\!\epsilon) I_n \!-\! {\bf B}_{k}(x)^{\top}\Upsilon_k \!-\! \Upsilon_k^{\top} {\bf B}_{k}(x) \\
	\Theta^{(1)}_{k}(x) &:= 
	\begin{bmatrix}
	\Theta^{(1,1)}_{k}(x) \quad &
	\Upsilon_k^{\top} - {\bf P}_{k}(x) - {\bf B}_{k}(x)^{\top}\Xi_k \\
	\star \quad & \Xi_k + \Xi_k^{\top} 
	\end{bmatrix}	 \\
	&= \sum_{\ell=0}^{m}
	\alpha_{p(k,\ell)}(x) \Theta^{(1)}_{k,\ell} \\
	\Theta^{(2)}_{k}(x) &:=
	\begin{bmatrix} 
	G_k^{(2)}(x)\\ 0
	\end{bmatrix}
	= \sum_{\ell=0}^{m}
	\alpha_{p(k,\ell)}(x) \Theta^{(2)}_{k,\ell}\\
	\Theta^{(3)}_{k}(x) &:= G_k^{(3)}(x) = 
	\sum_{\ell=0}^{m}
	\alpha_{p(k,\ell)}(x) \Theta^{(3)}_{k,\ell}.
	\end{align*}
	If the LMIs \eqref{eq:LMI_1D} are feasible for
	all $k=1,\dots,N$ and for all $\ell=0,\dots,m$, then
	there exists $\epsilon > 0$ such that 
	\[
	\begin{bmatrix}
	\Theta^{(1)}_{k,\ell} \quad & \Theta^{(2)}_{k,\ell} \\
	\star \quad & \Theta^{(3)}_{k,\ell}
	\end{bmatrix} \succeq 0\qquad \forall k = 1,\dots,N,~
	\forall \ell = 0,\dots,m.
	\]
	Therefore, 
	\[
	\begin{bmatrix}
	\Theta^{(1)}_{k}(x) \quad & \Theta^{(2)}_{k}(x) \\
	\star \quad & \Theta^{(3)}_{k}(x)
	\end{bmatrix} =
	\sum_{\ell=0}^{m}
	\alpha_{p(k,\ell)}(x)
	\begin{bmatrix}
	\Theta^{(1)}_{k,\ell} \quad & \Theta^{(2)}_{k,\ell} \\
	\star \quad & \Theta^{(3)}_{k,\ell}
	\end{bmatrix} \succeq 0
	\]	 
	for all $x \in \Omega_k$ and for all 
	$k = 1,\dots,N$.
	Since $\Theta^{(3)}_{k}(x) \succ 0$ for all $x \in \Omega_k$
	provided that the inequalities \eqref{eq:LMI_1D} hold,
	the Schur complement formula shows that 
	\begin{align}
	0 &\preceq
	\Theta^{(1)}_{k}(x) - \Theta^{(2)}_{k}(x) \Theta^{(3)}_{k}(x)^{-1} 
	\Theta^{(2)}_{k}(x)^{\top} \notag \\
	&= 
	\begin{bmatrix}
	R_k(x) \quad &
	\Upsilon_k^{\top} - {\bf P}_{k}(x) - {\bf B}_{k}(x)^{\top}\Xi_k \\
	\star \quad & \Xi_k + \Xi_k^{\top} 
	\end{bmatrix}
	\label{eq:Rk_Up_Xi}
	\end{align}
	for all $x \in \Omega_k$ and for all
	$k = 1,\dots,N$,
	where $R_k(x)$ is defined by
	\begin{align*}
	R_k(x) &:= Q_k(x)
	- {\bf B}_{k}(x)^{\top}\Upsilon_k - \Upsilon_k^{\top} {\bf B}_{k}(x)\text{~~with}\\
%	\end{align*}
%	with 
%	\begin{align*}
	Q_k(x) &:=
	\Lambda - ({\bm \sigma}_{k}(x)+\epsilon) I_n -
	G_k^{(2)}(x)  G^{(3)}_{k}(x)^{-1} 
	G_k^{(2)}(x)^{\top}.
	\end{align*}
	Applying Lemma~\ref{lem:LMI_formula} to the inequality \eqref{eq:Rk_Up_Xi},
	we obtain
	\[
	Q_k(x) -  {\bf B}_{k}(x)^{\top}{\bf P}_{k}(x) - {\bf P}_{k}(x) {\bf B}_{k}(x)
	\succeq 0
	\]
	for all $x \in \Omega_k$ and for
	all $k = 1,\dots,N$. 
	Using the Schur complement formula again, we derive 
	$
	G_k(x) 
	%=
	%	\begin{bmatrix}
	%	G_k^{(1)}(x) & 
	%	\theta_k(x) \\
	%	\star & \Theta^{(3)}_{k}(x)
	%	\end{bmatrix} 
	\succeq 0 
	$
	for every $x \in \Omega_k$ and every $k=1,\dots,N$.

	4. 	
	By \eqref{eq:B_PWL_error}, $I - \Phi_k(x)^{\top} \Phi_k(x) \succeq 0$
	for a.e. $x \in \Omega_k$ 
	and every $k = 1,\dots,N$.
	Since $G_k(x) \succeq 0$,
	%for every $x \in \Omega_k$ 
	%and every $k = 1,\dots,N$, 
	it follows that
	\begin{align*}
	0 &\preceq  \begin{bmatrix}
	I_n & 0 \\-\Phi_k(x) & 0 \\
	0 & I_n
	\end{bmatrix}^{\top}
	G_k(x)
	\begin{bmatrix}
	I_n & 0 \\-\Phi_k(x) & 0 \\
	0 & I_n
	\end{bmatrix} \\
	&= 
	-M_k(x) -
	\begin{bmatrix}
	{\bm \sigma}_k(x)\big(I_n - \Phi_k(x)^{\top} \Phi_k(x)\big) & 0 \\
	0 & 0
	\end{bmatrix}	\\
	&\hspace{45.5pt}- 
	\begin{bmatrix}
	\epsilon I_n & 0 \\
	0 & 0
	\end{bmatrix} 
	-
	\begin{bmatrix}
	-\Lambda & 0 \\
	0 & c^2\Lambda\otimes I_m
	\end{bmatrix}	\\
	&\preceq 	
	-M_k(x) 	
	-
	\begin{bmatrix}
	\epsilon I_n & 0 \\
	0 & 0
	\end{bmatrix}
	-
	\begin{bmatrix}
	-\Lambda & 0 \\
	0 & c^2\Lambda\otimes I_m
	\end{bmatrix}
	\end{align*}
	for a.e. $x \in \Omega_k$ and 
	for all $k=1,\dots,N$.
	%	Therefore, $M_k$ satisfies
	%	\begin{equation*}
	%	M_k(x) + 
	%	\begin{bmatrix}
	%	-\Lambda & 0 \\ 0 & c^2 \Lambda
	%	\end{bmatrix}
	%	\preceq
	%	-
	%	\begin{bmatrix}
	%	-\epsilon I_n & 0 \\ 0 & 0
	%	\end{bmatrix}\qquad \text{a.e.~} x \in \Omega_k,~
	%\forall k=1,\dots,N.
	%	\end{equation*}
%	This implies 
%	\begin{equation}
%	\label{eq:Mk_epsilon}
%	M_k(x) 
%	\preceq
%	-
%	\begin{bmatrix}
%	\epsilon I_n & 0 \\ 0 & 0
%	\end{bmatrix}
%	-
%	\begin{bmatrix}
%	-\Lambda & 0 \\
%	0 & c^2\Lambda\otimes I_m
%	\end{bmatrix}
%	\qquad 
%	\end{equation}
%	for a.e.  $x \in \Omega_k$ and for all 
%	$k=1,\dots,N$.
	Applying Corollary~\ref{coro:poincare},
	we obtain
	$
	\frac{dV}{dt}(t) \leq -\epsilon \|z(t)\|_{L^2}^2
	$ by \eqref{eq:V_deriv_polytopic_case}.
	Thus, the coupled parabolic system is exponentially stable
	from the same argument in  3. of the proof of Theorem~\ref{thm:stability}.
	This completes the proof.
\end{proof}

\begin{remark}[Complexity of  LMIs in  Theorem~\ref{thm:piecewise_linear}]
	In Theorem~\ref{thm:piecewise_linear},
	the total number $N_0$ of the vertices satisfies $N_0 \leq (m+1) N$.
	Therefore, there are $O(n^2N_0) = O(n^2mN)$ variables in the matrices $P_1,\dots,P_{N_0}$.
	The number of variables in the diagonal matrix $\Lambda$ is $O(n)$, and the number of scalar variables
	$\sigma_{k,\ell}$ $(k=1,\dots,N,~\ell=0,\dots,m)$ is $O(mN)$. Hence
	the LMIs of Theorem~\ref{thm:piecewise_linear} contain $O(n^2mN)$ variables in total.
	If $N=2^m$, then the number of variables satisfies
	$O(n^2m2^m)$.
	Let us next consider the case where 
	all the intersections of the $m$-simplices are their facets, i.e., $(m-1)$-simplices.  Then
	$N_0 \leq  m + N$, and hence
	the LMIs of Theorem~\ref{thm:piecewise_linear} has
	$O(n^2m+(n^2+m)N)$ variables. If $N=2^m$, then the worst-case number is given by
	$O\big((n^2+m)2^m\big)$.
\end{remark}
%\begin{exmp}
%	{\em
\section{Examples}
\subsection{1-D case}
Let $b \geq 0$ and 
consider the coupled 1-D parabolic system \eqref{eq:coupled_PDE} with
$\Omega = (0,1)$ and 
\begin{equation}
\label{eq:A_B_example}
A = 
\begin{bmatrix}
1 & 0.1 \\
0.5 & 1
\end{bmatrix},\quad
B(x) = 
2
\begin{bmatrix}
\sin(2\pi x)  & \tan(x) \\
\cos(\pi x) & 2x 
\end{bmatrix} + b I_2.
\end{equation}
Since the coefficient matrix $B(x)$ is not polynomial,
the techniques developed in 
the previous studies \cite{Valmorbida2015CDC, Valmorbida2016, Gahlawat2017, Gahlawat2017CDC, Solomon2015}
cannot be applied to this system.
To use the obtained results, 
we divide $\Omega$ into $N=100$ intervals 
\[
\Omega_k := 
\begin{cases}
\left(\frac{k-1}{N},\frac{k}{N}\right]  & \text{if~}k=1,\dots,N-1 \\
\left(\frac{N-1}{N},1\right) & \text{if~}k = N. 
\end{cases}
\]
The constant $B_k$ in \eqref{eq:B_bound} for Theorem~\ref{thm:stability} is chosen as
\[
B_k=B\left( \frac{2k-1}{2N}\right)\quad \forall k =1 ,\dots,N,
\]
which is the value of $B$ at the center of the interval $\Omega_k$.
Since 
the vertices of $\overline{\Omega}_k$ are 
$\xi_{p(k,0)} = (k-1)/N$ and $\xi_{p(k,1)} = k/N$,
the constant $B_{p(k,\ell)}$ in \eqref{eq:B_bound2} is given by
\[
B_{p(k,0)}=B\left(\frac{k-1}{N}\right),~B_{p(k,1)}=B\left(\frac{k}{N}\right)\quad \forall k =1 ,\dots,N.
\]
We numerically compute
the bound $\rho_k$ in \eqref{eq:B_bound} 
for Theorem~\ref{thm:stability} based on the following approximation:
\begin{align*}
\rho_k = \max_{x \in \overline{\Omega}_k}
\left\| 
B(x) -  B_{k}  
\right\| \approx
\max_{x \in \Omega_k^{\rm a}}
\left\| 
B(x) - B_{k}  
\right\| \quad 
\forall k = 1,\dots,N,
\end{align*}
where 
\[
\Omega_k^{\rm a} :=
\left\{
\frac{k-1}{N},
\frac{k-1}{N} + \frac{1}{20N},\dots,
\frac{k-1}{N} + \frac{2}{20N},\dots,
\frac{k}{N}
\right\}.
\]
The bound $\rho_k$
in \eqref{eq:B_bound2} for Theorem~\ref{thm:piecewise_linear} is computed in the same brute force way.
The constants $c$ in Poincar\'e-Friedrichs' inequality \eqref{eq:poincare_ineq}
and $v_{p(k,\ell)}$ in the LMI \eqref{eq:LMI_1D} are given by 
$c = 1/\pi$, $v_{p(k,0)} = -N$, and $v_{p(k,1)} = N$ ($k=1,\dots,N$), respectively.
Using finite differences with 1000 
uniformly distributed spatial points, we find that the approximated parabolic PDE
is stable if $b \leq 8.35$. The LMIs  in Theorems~\ref{thm:stability} and Theorem~\ref{thm:piecewise_linear} 
are feasible
for $b \leq 6.66$ and $b \leq 6.84$, respectively.  
From this example, we observe the effectiveness
of Lyapunov functions that depend on the space variable in a
piecewise linear fashion.

%}
%\end{exmp}

\subsection{3-D case}
Next, we illustrates the advantage of Lyapunov functions with constant $P$,
which allow us to analyze the stability of parabolic PDEs on a general set $\Omega$.
We consider
the coupled 3-D parabolic system \eqref{eq:coupled_PDE} with the unit ball
$\Omega = \{(x_1,x_2,x_3) \in \mathbb{R}^3:x_1^2+x_2^2+x_3^2 < 1\}$ and
the coefficient matrices
$A$ in \eqref{eq:A_B_example} and
\[
B(x_1,x_2,x_3) = 2
\begin{bmatrix}
\sin\big(2\pi (x_1+x_2)\big)  & \tan(x_3) \\
\cos \big(\pi (x_2+x_3)\big) & 2x_1 
\end{bmatrix} + b I_2,
\]
where $b \geq 0$.
The previous studies \cite{Valmorbida2015CDC,Solomon2015,Fridman2016}
for multi-dimensional PDEs focus on the case where $\Omega$ is a box.
Although balls are also basic sets, relatively little work has been done on
stability analysis for parabolic PDEs on balls.
Using the fact on the Rayleigh quotient for the Laplace operator (See, e.g., Theorem 2 in Sec.~6.5.1 of \cite{Evans1998}), 
we choose
the constant $c$ in Poincar\'e-Friedrichs' inequality \eqref{eq:poincare_ineq} as $c = 1/\pi$.
We divide $\Omega = 
\big\{
(r,\theta,\phi): r \in [0,1),~\theta \in [0,\pi], \phi \in [0,2\pi) 
\big\}$ by uniformly splitting
the intervals $[0,1)$, $[0,\pi]$, and $[0,2\pi)$ into $N \in \{5,10,15,20,25,30\}$ segments, respectively.
As in the 1-D case above,
the constant $B_k$ in \eqref{eq:B_bound} is set to
the value of $B$ at the center of each segment, and 
the bound $\rho_k$ in \eqref{eq:B_bound}
is numerically computed with a sufficiently fine grid.
Table~\ref{tab:b_data} describes the maximum $b \geq 0$ for which the LMIs in Theorem~\ref{thm:stability}
are feasible.  
This table shows that a large $N$ is required to obtain less conservative results.

\begin{table}[htb]
	\centering
	\caption{Maximum $b \geq 0$ for which LMIs in \eqref{eq:stability_LMI}
		are feasible.}
	\label{tab:b_data}
	\begin{tabular}{c|cccccc} 
		$N$ & 5 & 10 & 15 & 20 & 25 & 30\\ \hline
		$b$ & Infeasible & 0.14 & 0.84 & 1.07 & 1.67 & 2.07
	\end{tabular}
\end{table}

\section{Conclusion}
We have studied the stability analysis of coupled parabolic systems with
spatially varying coefficients.
Employing the gridding method developed for systems with
aperiodic sampling and time-varying delays, 
we have obtained LMI-based sufficient conditions for exponential stability.
Future work will focus on extending this gridding method to 
various classes of distributed parameter systems.
Another interesting direction  for future research would be to make 
stability analysis more accurate by
using integration operators with kernels for Lyapunov functions as in 
\cite{Gahlawat2017, Gahlawat2017CDC}.
If $B$ is a polynomial and $\Omega$ is a convex polytope, then sum-of-squares-based analysis through
P\`olya's theorem and Handelman representations is expected to be a less conservative 
approach.

\section{Proof of Existence and Uniqueness of Weak Solution}
For the PDE \eqref{eq:coupled_PDE},
define the function $a: H^1_0(\Omega)^n \times H^1_0(\Omega)^n \to \mathbb{R}$ by
\[
a(\zeta,v) := 
\big(
(A \otimes I_m) \nabla \zeta, \nabla v
\big)_{L^2} - (B\zeta,v)_{L^2}.
\]
We first obtain the following estimates on the function $a$:
\begin{lemma}
	Under Assumption~\ref{assum:A},
	there exist constants $C_1, C_2>0$, depending only on $\Omega$ and
	the coefficients $A,B$,  such that 
	\begin{subequations}
		\begin{align}
		|a(\zeta,v)| &\leq C_1 \|\zeta\|_{H^1_0} \cdot \|v\|_{H^1_0} \qquad \forall \zeta,v \in H^1_0(\Omega)^n
		\label{eq:a_bound}\\
		\alpha \|\zeta\|^2_{H^1_0} &\leq a(\zeta,\zeta) + C_2\|\zeta\|_{L^2}^2\qquad \forall \zeta \in H^1_0(\Omega)^n.
		\label{eq:a_coercive}
		\end{align}
	\end{subequations}
\end{lemma}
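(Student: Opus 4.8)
The plan is to establish \eqref{eq:a_bound} and \eqref{eq:a_coercive} as the standard boundedness and G\aa rding (coercivity) inequalities underlying the Lax--Milgram framework, treating the principal (diffusion) term and the zeroth-order (reaction) term separately. The boundedness estimate will follow from the Cauchy--Schwarz inequality together with the operator-norm identities $\|A \otimes I_m\| = \|A\|$ and the essential-supremum bound $\|B\|_{L^\infty}$, while the coercivity estimate will rest on Assumption~\ref{assum:A} applied in each coordinate direction, after which the reaction contribution is absorbed into the slack term $C_2\|\zeta\|_{L^2}^2$. Both resulting constants will depend only on $A$ and on $\|B\|_{L^\infty(\Omega)}$, as required.

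For \eqref{eq:a_bound} I would bound the two summands of $a(\zeta,v)$ directly. For the principal term, Cauchy--Schwarz in $L^2(\Omega)^{mn}$ gives
\[
\big|\big((A\otimes I_m)\nabla\zeta,\nabla v\big)_{L^2}\big| \leq \|A\otimes I_m\|\,\|\nabla\zeta\|_{L^2}\,\|\nabla v\|_{L^2} \leq \|A\|\,\|\zeta\|_{H^1_0}\,\|v\|_{H^1_0},
\]
using that the singular values of $A\otimes I_m$ are those of $A$ (each with multiplicity $m$), so $\|A\otimes I_m\|=\|A\|$, and that $\|\nabla\zeta\|_{L^2}\leq\|\zeta\|_{H^1_0}$. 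For the reaction term, the bound on $B$ yields $|(B\zeta,v)_{L^2}|\leq\|B\|_{L^\infty}\|\zeta\|_{L^2}\|v\|_{L^2}\leq\|B\|_{L^\infty}\|\zeta\|_{H^1_0}\|v\|_{H^1_0}$. Setting $C_1:=\|A\|+\|B\|_{L^\infty}$ then gives \eqref{eq:a_bound}.

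For \eqref{eq:a_coercive} the key step, which I expect to be the main obstacle, is extracting coercivity of the principal term from Assumption~\ref{assum:A} in spite of the Kronecker structure. Writing $\partial_{x_\ell}\zeta:=[\partial_{x_\ell}\zeta_1,\dots,\partial_{x_\ell}\zeta_n]^\top\in\mathbb{R}^n$ for the vector of $\ell$-th partial derivatives, I would reindex the integrand to obtain the pointwise identity
\[
(\nabla\zeta)^\top (A\otimes I_m)\,\nabla\zeta = \sum_{\ell=1}^m (\partial_{x_\ell}\zeta)^\top A\,\partial_{x_\ell}\zeta,
\]
which follows by expanding the Kronecker product into blocks $A_{ij}I_m$ and regrouping. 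Assumption~\ref{assum:A} then applies to each $\partial_{x_\ell}\zeta\in\mathbb{R}^n$ almost everywhere, giving $(\nabla\zeta)^\top(A\otimes I_m)\nabla\zeta\geq\alpha\sum_{\ell=1}^m\|\partial_{x_\ell}\zeta\|^2=\alpha\|\nabla\zeta\|^2$ a.e., and integration yields $\big((A\otimes I_m)\nabla\zeta,\nabla\zeta\big)_{L^2}\geq\alpha\|\nabla\zeta\|_{L^2}^2$. The only real subtlety is the bookkeeping in this reindexing; once it is in hand, the rest is a direct chain of inequalities.

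Finally I would combine the two bounds. Using $|(B\zeta,\zeta)_{L^2}|\leq\|B\|_{L^\infty}\|\zeta\|_{L^2}^2$ gives $a(\zeta,\zeta)\geq\alpha\|\nabla\zeta\|_{L^2}^2-\|B\|_{L^\infty}\|\zeta\|_{L^2}^2$. Since $\alpha\|\zeta\|_{H^1_0}^2=\alpha\|\nabla\zeta\|_{L^2}^2+\alpha\|\zeta\|_{L^2}^2$, choosing $C_2:=\alpha+\|B\|_{L^\infty}$ makes the slack term large enough to dominate both the reaction contribution and the extra $\alpha\|\zeta\|_{L^2}^2$, so that $a(\zeta,\zeta)+C_2\|\zeta\|_{L^2}^2\geq\alpha\|\nabla\zeta\|_{L^2}^2+\alpha\|\zeta\|_{L^2}^2=\alpha\|\zeta\|_{H^1_0}^2$, which is \eqref{eq:a_coercive}. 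Note that Poincar\'e--Friedrichs' inequality is not needed here, precisely because the lower-order term is retained on the right-hand side rather than absorbed.
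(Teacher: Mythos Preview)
Your proof is correct and follows essentially the same route as the paper: Cauchy--Schwarz plus the $L^\infty$ bound on $B$ for \eqref{eq:a_bound}, and the directionwise reindexing of the Kronecker form to invoke Assumption~\ref{assum:A} pointwise for \eqref{eq:a_coercive}, arriving at the same explicit constant $C_2=\alpha+\|B\|_{L^\infty}$. The only (inessential) addition is your observation that $\|A\otimes I_m\|=\|A\|$, which the paper leaves implicit.
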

\begin{proof}
	We obtain the first inequality \eqref{eq:a_bound} by
	\begin{align*}
	|a(\zeta,v)| &\leq 
	\big|
	\big((A\otimes I_m)\nabla \zeta,\nabla v \big)_{L^2}
	\big| +
	\big|
	(B\zeta,v )_{L^2}
	\big|	\\	 
	&\leq 
	\|A \otimes I_m \| \!\cdot \! \|\nabla \zeta\|_{L^2} \!\cdot \! \| \nabla v\|_{L^2} + 
	\|B\|_{L^\infty} \!\cdot \! \|\zeta\|_{L^2} \!\cdot \! \|v\|_{L^2} \\
	&\leq C_1 \|\zeta\|_{H^1_0} \cdot \|v\|_{H^1_0}\qquad \text{for some $C_1 >0$.}
	\end{align*}
	
	To obtain the second inequality \eqref{eq:a_coercive}, 
	we see from Assumption~\ref{assum:A} 
	that for every $\zeta = \begin{bmatrix}
	\zeta_1 & \dots & \zeta_n
	\end{bmatrix}^\top \in H^1_0(\Omega)^n,
	$
	\begin{align*}
	\big(
	(A \otimes I_m) \nabla \zeta, \nabla \zeta
	\big)_{L^2}
	&=
	\int_\Omega 
	\sum_{\ell=1}^m
	\begin{bmatrix}
	\frac{\partial \zeta_1}{\partial x_\ell}(x) \\ \vdots \\
	\frac{\partial \zeta_n}{\partial x_\ell}(x) 
	\end{bmatrix}^{\top} A
	\begin{bmatrix}
	\frac{\partial \zeta_1}{\partial x_\ell}(x)  \\ \vdots \\
	\frac{\partial \zeta_n}{\partial x_\ell}(x) 
	\end{bmatrix} dx \\
	&\geq 
	\alpha 	\int_\Omega 
	\sum_{\ell=1}^m
	\left\|
	\begin{bmatrix}
	\frac{\partial \zeta_1}{\partial x_\ell}(x)  \\ \vdots \\
	\frac{\partial \zeta_n}{\partial x_\ell}(x) 
	\end{bmatrix}
	\right\|^2 dx
	= \alpha \|\nabla \zeta\|^2_{L^2}.
	\end{align*} 
	Therefore, 
	\begin{align*}
	\alpha \|\nabla \zeta\|^2_{L^2} 
	&\leq 	(
	\big(A \otimes I_m) \nabla \zeta, \nabla \zeta
	\big)_{L^2} \\
	&\leq a(\zeta,\zeta) + \|B\|_{L^\infty} \cdot \|\zeta\|_{L^2}^2.
	\end{align*}
	Thus, 
	\[
	\alpha \|\zeta\|^2_{H^1_0} \leq a(\zeta,\zeta) + (\alpha + \|B\|_{L^\infty} ) \|\zeta\|_{L^2}^2.
	\]
	This completes the proof.
\end{proof}

Let us next apply Galerkin's method.
Let $\{e_k:k \in \mathbb{N}\}$ be an orthonormal basis of $L^2(\Omega)$ and
an orthogonal basis of $H^1_0(\Omega)$. 
Define $\{w_k:k \in \mathbb{N}\} \subset L^2(\Omega)^n$ by
\[
w_1 := 
\begin{bmatrix}
e_1 \\ 0 \\ \vdots \\ 0
\end{bmatrix},~
w_2 := 
\begin{bmatrix}
0 \\ e_1 \\ \vdots \\ 0
\end{bmatrix},\dots
w_n := 
\begin{bmatrix}
0 \\ \vdots \\ 0 \\ e_1
\end{bmatrix},~
w_{n+1} :=
\begin{bmatrix}
e_2 \\ 0 \\ \vdots  \\ 0
\end{bmatrix},\dots.
\] 
Then $\{w_k:k \in \mathbb{N}\} $ is an 
an orthonormal basis of $L^2(\Omega)^n$ and
an orthogonal basis of $H^1_0(\Omega)^n$. 
Define a finite-dimensional subspace $E_N := \{ 
w_1,\dots,w_N
\} \subset H^1_0 (\Omega)^n$.

%For simplicity of notation, we denote by $f'$ the time-derivative of an absolutely continuous function $f:[0,T] \to \mathbb{R}$.
We now prove that there uniquely exist absolutely continuous functions $\psi_N^1,\dots,\psi_N^N:[0,T] \to \mathbb{R}$ such that 
the function $z_N$ defined by
\begin{equation}
\label{eq:approximate_solution}
z_N(t) := \sum_{k=1}^N \psi_N^k(t)w_k 
\end{equation}
satisfies 
\begin{equation}
\label{eq:app_space}
z_N,\frac{dz_N}{dt} \in L^2(0,T;E_N)
\end{equation}
and
%\begin{subequations}
%	\label{eq:app_PDE}
\begin{align}
\label{eq:app_PDE}
\begin{cases}
\displaystyle
\left(\frac{dz_N}{dt}(t), w_k\right)_{L^2} + a(z_N(t),w_k) = 0 \\
\hspace{38pt}\text{a.e.~}t \in [0,T],~\forall k =1,\dots,N\\
\psi_N^k(0) = (z^0,w_k)_{L^2} \qquad \forall k =1,\dots,N.
\end{cases}
\end{align}
%\end{subequations}

\begin{lemma}
	\label{lem:approximate_solution}
	For each $N \in \mathbb{N}$, 
	there exists a unique function $z_N$ of the form \eqref{eq:approximate_solution}
	with absolutely continuous coefficients $\psi_N^1,\dots,\psi_N^N$
	such that \eqref{eq:app_space} and \eqref{eq:app_PDE} hold.
\end{lemma}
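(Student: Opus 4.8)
The plan is to reduce the Galerkin conditions \eqref{eq:app_space}--\eqref{eq:app_PDE} to a constant-coefficient linear system of ordinary differential equations for the coefficient vector $\bm{\psi}_N := [\psi_N^1,\dots,\psi_N^N]^\top$, and then to invoke the classical existence and uniqueness theory for such systems. First I would substitute the ansatz \eqref{eq:approximate_solution} into the first identity of \eqref{eq:app_PDE}. Because $\{w_k\}$ is an orthonormal basis of $L^2(\Omega)^n$, the inner-product term collapses to $(\tfrac{dz_N}{dt}(t),w_k)_{L^2} = \dot{\psi}_N^k(t)$, so the ``mass matrix'' is the identity. Using the bilinearity of $a$, the second term becomes $\sum_{j=1}^N \psi_N^j(t)\, a(w_j,w_k)$. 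Setting the constant matrix $\mathcal{A}_N := [a(w_j,w_k)]_{k,j=1}^N$, whose entries are finite by the boundedness estimate \eqref{eq:a_bound}, the Galerkin equations are equivalent to
\[
\dot{\bm{\psi}}_N(t) = -\mathcal{A}_N \bm{\psi}_N(t) \quad \text{a.e.~} t \in [0,T], \qquad
\bm{\psi}_N(0) = \big[(z^0,w_1)_{L^2},\dots,(z^0,w_N)_{L^2}\big]^\top .
\]

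Next I would apply the standard theory of linear ordinary differential equations: since $\mathcal{A}_N$ is a fixed $N \times N$ matrix and the initial datum is a fixed vector, there exists a unique solution $\bm{\psi}_N(t) = e^{-\mathcal{A}_N t}\bm{\psi}_N(0)$ on $[0,T]$, which is in fact of class $C^\infty$ and hence absolutely continuous with continuous derivative. This gives existence and uniqueness of the coefficients $\psi_N^1,\dots,\psi_N^N$, and therefore of the function $z_N$ defined by \eqref{eq:approximate_solution}.

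Finally I would verify the regularity requirement \eqref{eq:app_space}. Using the orthonormality of $\{w_k\}$ in $L^2(\Omega)^n$ once more, one has $\|z_N(t)\|_{L^2}^2 = \sum_{k=1}^N |\psi_N^k(t)|^2$ and $\|\tfrac{dz_N}{dt}(t)\|_{L^2}^2 = \sum_{k=1}^N |\dot{\psi}_N^k(t)|^2$; since the $\psi_N^k$ and $\dot{\psi}_N^k$ are continuous on the compact interval $[0,T]$, both quantities are bounded, so $z_N$ and $\tfrac{dz_N}{dt}$ lie in $L^2(0,T;E_N)$, establishing \eqref{eq:app_space}.

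As for difficulties, this step is the elementary part of Galerkin's method and poses no genuine obstacle beyond careful bookkeeping of indices in the definition of $\mathcal{A}_N$. The only place the earlier work enters is the finiteness of the entries $a(w_j,w_k)$, which is immediate from \eqref{eq:a_bound}. The substantive effort---deriving the uniform-in-$N$ energy bounds needed to pass to the limit $N \to \infty$ and recover an actual weak solution---is deferred to the subsequent parts of the existence argument rather than to this lemma.
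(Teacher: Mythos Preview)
Your proposal is correct and follows essentially the same route as the paper: both reduce the Galerkin system to the constant-coefficient linear ODE $\dot{\bm{\psi}}_N + A_N \bm{\psi}_N = 0$ by exploiting the $L^2$-orthonormality of $\{w_k\}$, and then read off existence and the required regularity from classical ODE theory. The one noteworthy difference is uniqueness: you obtain it immediately from the standard linear ODE theory (equivalently, from the explicit formula $\bm{\psi}_N(t)=e^{-A_N t}\bm{\psi}_N(0)$), whereas the paper defers uniqueness to the energy estimate of the next lemma, showing that $z_N(0)=0$ forces $z_N\equiv 0$ via a Gronwall argument. Your route is the more elementary and self-contained one for this finite-dimensional step; the paper's choice has the minor expository advantage of rehearsing, already at the approximate level, the same energy mechanism that later yields uniqueness of the weak solution itself.
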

\begin{proof}
	By definition, $z_N(t)$ of the form \eqref{eq:approximate_solution}
	satisfies \eqref{eq:app_space} if and only if
	\begin{equation}
	\label{eq:app_coeff_space}
	\psi_N^k,\frac{d\psi_N^k}{dt} \in L^2(0,T)\qquad \forall k=1,\dots,N.
	\end{equation}
	Moreover,
	we obtain
	\[
	\left(\frac{dz_N}{dt}(t),w_k\right)_{L^2} = \frac{d\psi_N^{k}}{dt}(t).
	\]
	and
	\begin{align*}
	a(z_N(t),w_k) &= 
	\sum_{\ell=1}^N
	\big(
	(A \otimes I_m) \psi_N^{\ell} (t) \nabla w_\ell,\nabla w_k
	\big)_{L^2} \\ &\qquad - 
	(B \psi_N^{\ell} (t)w_\ell,w_k)_{L^2} \\
	&=
	\sum_{\ell=1}^N  a(w_\ell,w_k)\psi_N^\ell (t).
	\end{align*}
	Hence the first equation in \eqref{eq:app_PDE}  holds if and only if
	\[
	\frac{d
		\psi_N^{k}}{dt} (t)+ \sum_{\ell=1}^N a(w_\ell,w_k) \psi_N^\ell (t) = 0
	\]
	for a.e. $t \in [0,T]$ and every $k = 1,\dots,N$,
	which is equivalent to 
	\begin{equation}
	\label{eq:ODE}
	\frac{d\psi_N}{dt}(t) +A_N \psi_N(t) = 0\qquad
	\text{a.e.~}t \in [0,T], 
	\end{equation}
	where
	\[
	\psi_N := 
	\begin{bmatrix}
	\psi_N^1 \\ \vdots \\ \psi_N^N
	\end{bmatrix},~
	A_N := 
	\begin{bmatrix}
	a(w_1,w_1) & \cdots & a(w_N,w_1) \\
	\vdots & & \vdots \\
	a(w_N,w_1) & \cdots & a(w_N,w_N) 
	\end{bmatrix}.
	\]
	The ordinary differential equation \eqref{eq:ODE} with initial data given by
	the second equation in \eqref{eq:app_PDE} has a continuously differentiable solution,
	which satisfies \eqref{eq:app_coeff_space}.
	Thus, there exists a function $z_N$ of the form \eqref{eq:approximate_solution}
	with absolutely continuous coefficients $\psi_N^1,\dots,\psi_N^N$
	such that \eqref{eq:app_space} and \eqref{eq:app_PDE} is satisfied.
	
	To prove the uniqueness, it suffices to show that 
	if $z_N(0) = 0$, then $z_N(t) = 0$ for every $t \in [0,T]$.
	This will be proved in Lemma~\ref{lem:energy_estimate} below.
	%	To prove the uniqueness, it suffices to show that 
	%	if the initial data are  given by
	%	\[
	%	\psi_N^k(0) = 0 \qquad \forall k =1,\dots,N,
	%	\]
	%	then $z_N$ of the form \eqref{eq:approximate_solution}
	%	with absolutely continuous coefficients 
	%	satisfies $z_N(t) = 0$ for every $t \in [0,T]$.
	%	Since the coefficient $\psi_N^k$ is absolutely continuous for every $k=1,\dots,N$,
	%	we have that 
	%	the function $t \to \|\psi_N(t)\|$  is also absolutely continuous, by
	%	using 
	%	\[
	%	\big| \|v\| - \|w\| \big| \leq \|v-w\| \leq 
	%	\sum_{k=1}^N |v_k-w_k|\qquad \forall v,w \in \mathbb{R}^N,
	%	\]	
	%	where $v_k, w_k \in \mathbb{R}$ are the $k$th entry of $v,w \in \mathbb{R}^N$,
	%	respectively.
	%%	for every $v = \begin{bmatrix}
	%%	v_1 &  \cdots & v_n
	%%	\end{bmatrix}^{\top}, 
	%%	w = \begin{bmatrix}
	%%	w_1 &  \cdots & w_n
	%%	\end{bmatrix}^{\top} \in \mathbb{R}^N$.
	%	Moreover, \eqref{eq:ODE} yields
	%	\[
	%	\frac{d}{dt} \|\psi_N(t)\| \leq 
	%	\|A_N\| \cdot \|\psi_N(t)\|
	%	\qquad
	%	\text{a.e.~}t \in [0,T].
	%	\]
	%	Gronwall's inequality shows that
	%	\[
	%	\|\psi_N(t)\|  \leq e^{\|A_N\|t}  \|\psi_N(0) \|=0\qquad \forall t \in [0,T].
	%	\]
	%	Thus $\psi_N \equiv 0$.
	%	This completes the proof.
\end{proof}

We next evaluate the energy of approximate solutions $z_N$.
\begin{lemma}
	\label{lem:energy_estimate}
	Under Assumption~\ref{assum:A},
	there exists a constant $C >0$, depending only on $T$, $\Omega$, and
	the coefficients $A,B$, such that 
	for every $N \in \mathbb{N}$, the approximate solution $z_N$ constructed in 
	Lemma~\ref{lem:approximate_solution} satisfies
	\begin{align*}
	\|z_N\|_{L^{\infty}(0,T;L^2)}  &+ \|z_N\|_{L^2(0,T; H^1_0)} \\
	&+ 
	\left\|\frac{dz_N}{dt}\right\|_{L^2(0,T;H^{-1})} \leq C \|z^0\|_{L^2}.
	\end{align*}
\end{lemma}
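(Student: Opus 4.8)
The plan is to follow the standard energy-method argument for Galerkin approximations (as in Sec.~7.1 of \cite{Evans1998}), crucially exploiting that $\{w_k\}$ is simultaneously orthonormal in $L^2(\Omega)^n$ and orthogonal in $H^1_0(\Omega)^n$. First I would derive an energy identity by testing the Galerkin system against $z_N$ itself: multiplying the first equation in \eqref{eq:app_PDE} by $\psi_N^k(t)$, summing over $k=1,\dots,N$, and using $z_N(t)=\sum_{k=1}^N \psi_N^k(t)w_k \in E_N$ gives
\[
\left(\frac{dz_N}{dt}(t),z_N(t)\right)_{L^2} + a\big(z_N(t),z_N(t)\big)=0 \qquad \text{a.e.~}t\in[0,T].
\]
Since the first term equals $\tfrac12\tfrac{d}{dt}\|z_N(t)\|_{L^2}^2$, the coercivity estimate \eqref{eq:a_coercive} converts this into the differential inequality $\tfrac{d}{dt}\|z_N(t)\|_{L^2}^2 + 2\alpha\|z_N(t)\|_{H^1_0}^2 \le 2C_2\|z_N(t)\|_{L^2}^2$.

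For the first two bounds I would argue as follows. Dropping the nonnegative $H^1_0$ term and applying Gronwall's inequality, together with the Bessel bound $\|z_N(0)\|_{L^2}^2=\sum_{k=1}^N|(z^0,w_k)_{L^2}|^2\le\|z^0\|_{L^2}^2$, yields $\|z_N(t)\|_{L^2}^2 \le e^{2C_2 T}\|z^0\|_{L^2}^2$ for all $t\in[0,T]$, hence the $L^\infty(0,T;L^2)$ bound. Integrating the differential inequality over $[0,T]$ instead and retaining the $H^1_0$ term controls $\alpha\int_0^T\|z_N(t)\|_{H^1_0}^2\,dt$ by $\tfrac12\|z^0\|_{L^2}^2 + C_2 T\,e^{2C_2 T}\|z^0\|_{L^2}^2$, giving the $L^2(0,T;H^1_0)$ bound.

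The remaining and most delicate step is the estimate on $\tfrac{dz_N}{dt}$ in $L^2(0,T;H^{-1})$, and this is where I expect the main obstacle. I would fix $v\in H^1_0(\Omega)^n$ with $\|v\|_{H^1_0}\le 1$ and decompose $v=v^1+v^2$ with $v^1$ the projection of $v$ onto $E_N$. The key observation requiring care is that, because $\{w_k\}$ is at once $L^2$-orthonormal and $H^1_0$-orthogonal, the $L^2$- and $H^1_0$-orthogonal projections onto $E_N$ coincide (they agree on every basis vector $w_j$, hence everywhere); consequently $v^2$ is orthogonal to $E_N$ in \emph{both} inner products, so $\|v^1\|_{H^1_0}\le\|v\|_{H^1_0}\le 1$ while $\big(\tfrac{dz_N}{dt}(t),v^2\big)_{L^2}=0$. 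Since $\tfrac{dz_N}{dt}(t)\in E_N$, this gives $\langle\tfrac{dz_N}{dt}(t),v\rangle=\big(\tfrac{dz_N}{dt}(t),v^1\big)_{L^2}=-a\big(z_N(t),v^1\big)$ by linearity of the Galerkin equations, and the boundedness estimate \eqref{eq:a_bound} bounds this by $C_1\|z_N(t)\|_{H^1_0}$.

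Taking the supremum over such $v$ yields $\big\|\tfrac{dz_N}{dt}(t)\big\|_{H^{-1}}\le C_1\|z_N(t)\|_{H^1_0}$; squaring and integrating in $t$ reduces the claim to the $L^2(0,T;H^1_0)$ bound already obtained. Collecting the three estimates and choosing $C$ as the sum of the resulting constants (each depending only on $T$, $\Omega$, and $A,B$ through $\alpha$, $C_1$, $C_2$) completes the proof, and in particular the case $z^0=0$ forces $z_N\equiv 0$, which supplies the uniqueness deferred from Lemma~\ref{lem:approximate_solution}.
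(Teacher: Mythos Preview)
Your proposal is correct and follows essentially the same approach as the paper: test the Galerkin system against $z_N$, use the coercivity estimate \eqref{eq:a_coercive} together with Gronwall for the $L^\infty(0,T;L^2)$ and $L^2(0,T;H^1_0)$ bounds, and then use the $L^2/H^1_0$-orthogonal decomposition $v=v^1+v^2$ combined with the boundedness estimate \eqref{eq:a_bound} to control $\tfrac{dz_N}{dt}$ in $H^{-1}$. Your explicit justification that the $L^2$- and $H^1_0$-projections onto $E_N$ coincide (so that $\|v^1\|_{H^1_0}\le\|v\|_{H^1_0}$) and your integrated argument for the $L^2(0,T;H^1_0)$ bound are in fact slightly more careful than the paper's presentation, but the route is the same.
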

\begin{proof}
	Since $z_N(t) \in E_M$, it
	follows from \eqref{eq:app_PDE} that
	\[
	\left(\frac{dz_N}{dt}(t), z_N(t)\right)_{L^2} + a(z_N(t),z_N(t)) = 0 \qquad \text{a.e.~} t \in [0,T].
	\]
	Therefore, \eqref{eq:a_coercive} yields
	\begin{equation}
	\label{eq:alpha_H10}
	\frac{1}{2} \frac{d}{dt} \|z_N(t)\|_{L^2}^2 + 
	\alpha \|z_N(t)\|^2_{H^1_0} \leq C_2 \|z_N(t)\|_{L^2}^2\quad \text{a.e.~} t \in [0,T],
	\end{equation}
	which implies that 
	\[
	\frac{d}{dt} \|z_N(t)\|_{L^2}^2 \leq 2C_2 \|z_N(t)\|_{L^2}^2\qquad \text{a.e.~} t \in [0,T].
	\]
	Since the function $t \to \|z_N(t)\|_{L^2}^2$ is absolutely continuous, it follows from
	Gronwall's inequality that  
	\begin{equation}
	\label{eq:z_N_bound}
	\|z_N(t)\|_{L^2}^2 \leq e^{-2C_2t} \|z_N(0)\|_{L^2}^2 
	\qquad \forall t\in[0,T].
	\end{equation}
	Since $\|z_N(0)\|_{L^2} \leq \|z^0\|_{L^2}$,
	there exists $C_3>0$ such that 
	\[
	\|z_N\|_{L^{\infty}(0,T;L^2)} \leq C_3 \|z^0\|_{L^2}.
	\]
	Moreover, \eqref{eq:z_N_bound} shows that 
	$z_N(t) = 0$ for every $t\in[0,T]$ if $z_N(0)=0$.
	Therefore, the approximate solution $z_N$ constructed in 
	Lemma~\ref{lem:approximate_solution} is unique.
	
	By \eqref{eq:alpha_H10}, we also derive
	\[
	\alpha \|z_N(t)\|_{H^1_0}^2 \leq C_2 \|z_N(t)\|_{L^2}^2\qquad \text{a.e.~} t \in [0,T],
	\]
	and hence there exists $C_4 > 0 $ such that 
	\begin{align*}
	\|z_N\|^2_{L^2(0,T; H^1_0)} &= 
	\int^T_0 \|z_N(t)\|_{H^1_0}^2 dt \\&\leq
	\frac{C_2}{\alpha}  \int^T_0 \|z_N(t)\|_{L^2}^2 dt \\
	&\leq 
	C_4 \|z^0\|^2_{L^2}.
	\end{align*}	
	Fix $v \in H^1_0(\Omega)$ with $\|v\|_{H^1_0} \leq 1$.
	We can decompose $v$ as $v = v^1+v^2$ with $v^1 \in E_N$ and 
	$(v^2,w_k)_{L^2} = 0$ for every $k= 1,\dots,N$.
	Then $\|v^1\|_{H^1_0} \leq \|v\|_{H^1_0} \leq 1$ and 
	\[
	\left(\frac{dz_N}{dt}(t),v^1\right)_{L^2} + a(z_N(t),v^1) = 0\qquad \text{a.e.~}t \in [0,T].
	\]
	Using \eqref{eq:a_bound}, we therefore obtain
	\begin{align*}
	\left|
	\left\langle \frac{dz_N}{dt}(t), v \right\rangle
	\right| &=
	\left|
	\left(\frac{dz_N}{dt}(t), v \right)_{L^2}
	\right| 
	=
	\left|
	\left(\frac{dz_N}{dt}(t), v^1 \right)_{L^2}
	\right| \\
	&=
	|a(z_N(t),v^1)| \\
	&\leq C_1 \|z_N(t)\|_{H^1_0} \cdot \|v^1\|_{H^1_0} \\
	& \leq C_1 \|z_N(t)\|_{H^1_0}
	\qquad \text{a.e.~}t \in [0,T].
	\end{align*}
	This implies that 
	\[
	\left\|\frac{dz_N}{dt}(t)\right\|_{H^{-1}} \leq C_1\|z_N(t)\|_{H^1_0}
	\qquad \text{a.e.~}t \in [0,T].
	\]
	Thus 
	\begin{align*}
	\left\|\frac{dz_N}{dt}(t)\right\|_{L^2(0,T;H^{-1})}^2 &= 
	\int^T_0 \left\|\frac{dz_N}{dt}(t)\right\|_{H^{-1}}^2 dt \\&\leq
	\int^T_0 C_1^2  \|z_N(t)\|_{H^1_0}^2 dt \\
	&= C_1^2\|z_N\|^2_{L^2(0,T; H^1_0)} \\ &\leq C_1^2 C_4 \|z^0\|_{L^2}^2.
	\end{align*}
	This completes the proof.
\end{proof}

Since $H^{-1}(\Omega)^n$ is the dual of $H^1_0(\Omega)^n$ (with respect to the pivot space $L^2(\Omega)^n$),
we can identify the dual space of $L^2(0,T;H^1_0(\Omega)^n)$ with $L^2(0,T;H^{-1}(\Omega)^n)$ by
Theorem 6.30 of \cite{Hunter2014}.
Since $H^1_0(\Omega)^n$ is reflexive, it follows that 
the dual space of $L^2(0,T;H^{-1}(\Omega)^n)$ can be also identified with $L^2(0,T;H^1_0(\Omega)^n)$.
Using these facts, we show the existence of weak solutions.

\begin{theorem}
	Under Assumption~\ref{assum:A},
	a subsequence of the approximate solutions $\{z_N:N\in \mathbb{N} \}$  constructed in 
	Lemma \ref{lem:approximate_solution} converges weakly to a weak solution 
	of \eqref{eq:coupled_PDE}.
\end{theorem}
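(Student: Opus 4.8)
The plan is to carry out the standard compactness-and-passage-to-the-limit step of Galerkin's method. First I would invoke the uniform energy bound of Lemma~\ref{lem:energy_estimate}: the sequence $\{z_N\}$ is bounded in the Hilbert space $L^2(0,T;H^1_0(\Omega)^n)$ and $\{dz_N/dt\}$ is bounded in $L^2(0,T;H^{-1}(\Omega)^n)$, both uniformly in $N$. Since these spaces are reflexive, weak sequential compactness furnishes a subsequence (not relabeled) and a limit $z$ with $z_N \rightharpoonup z$ weakly in $L^2(0,T;H^1_0(\Omega)^n)$ and $dz_N/dt \rightharpoonup dz/dt$ weakly in $L^2(0,T;H^{-1}(\Omega)^n)$; the fact that the weak limit of the derivatives is the derivative of the weak limit follows because distributional differentiation in $t$ is weakly continuous. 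In particular $z \in L^2(0,T;H^1_0(\Omega)^n)$ with $dz/dt \in L^2(0,T;H^{-1}(\Omega)^n)$, so Theorem~\ref{thm:z_continuity} applies and $z \in C\big([0,T];L^2(\Omega)^n\big)$.

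Second, I would pass to the limit in the Galerkin equation \eqref{eq:app_PDE}. Fix $N_0$ and take a test function $v(t) = \sum_{k=1}^{N_0} d_k(t) w_k$ with $C^1$ scalar coefficients $d_k$. For every $N \geq N_0$ the first equation in \eqref{eq:app_PDE}, after multiplying by $d_k(t)$, summing, and integrating over $(0,T)$, gives
\[
\int_0^T \left\langle \frac{dz_N}{dt}(t), v(t) \right\rangle dt + \int_0^T a\big(z_N(t), v(t)\big)\, dt = 0.
\]
The first integral converges because $dz_N/dt \rightharpoonup dz/dt$ weakly and $v$ is a fixed element of $L^2(0,T;H^1_0(\Omega)^n)$; the second converges because, by the boundedness estimate \eqref{eq:a_bound}, the map $w \mapsto \int_0^T a\big(w(t), v(t)\big)\,dt$ is a bounded linear functional on $L^2(0,T;H^1_0(\Omega)^n)$, and $z_N \rightharpoonup z$. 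Hence the identity holds with $z_N$ replaced by $z$. Since functions of the stated form are dense in $L^2(0,T;H^1_0(\Omega)^n)$, the limiting identity holds for all such $v$, and a standard localization with $v(t) = \phi(t) w$, $\phi \in C^\infty_0(0,T)$, $w \in H^1_0(\Omega)^n$, recovers
\[
\left\langle \frac{dz}{dt}(t), w \right\rangle + a\big(z(t), w\big) = 0 \qquad \text{a.e.~}t \in [0,T],
\]
which is exactly condition~1) in Definition~\ref{defn:weak_solution} after recalling the definition of $a$.

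Finally, I would verify the initial condition $z(0) = z^0$. Here I would take a test function $v \in C^1\big([0,T];H^1_0(\Omega)^n\big)$ of the above finite form with $v(T) = 0$, integrate the Galerkin and the limiting identities by parts in $t$, and pass to the limit. Because $z_N(0)$ is the $L^2$-orthogonal projection of $z^0$ onto $E_N$ and hence converges strongly to $z^0$ in $L^2(\Omega)^n$, the limit procedure yields
\[
-\int_0^T \left( z(t), \frac{dv}{dt}(t) \right)_{L^2}\! dt + \int_0^T a\big(z(t), v(t)\big)\, dt = (z^0, v(0))_{L^2}.
\]
Comparing this with the same integration-by-parts identity derived directly from the weak formulation already established (which produces $(z(0), v(0))_{L^2}$ on the right) and using that $v(0) \in H^1_0(\Omega)^n$ is arbitrary, I conclude $z(0) = z^0$. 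I expect the main obstacle to be this last step: one must identify the boundary term at $t=0$ correctly, which relies on $z \in C\big([0,T];L^2(\Omega)^n\big)$ from Theorem~\ref{thm:z_continuity} and on the strong convergence $z_N(0) \to z^0$. The weak-compactness and passage-to-the-limit steps, by contrast, are routine once the uniform bounds of Lemma~\ref{lem:energy_estimate} are in hand.
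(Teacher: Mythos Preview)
Your proposal is correct and follows essentially the same approach as the paper's proof: extract a weakly convergent subsequence from the uniform bounds of Lemma~\ref{lem:energy_estimate}, pass to the limit in the Galerkin equations against test functions built from finitely many $w_k$, and recover the initial condition by integrating by parts in $t$ against a test function that vanishes at $t=T$. The only cosmetic differences are that the paper uses separable test functions $\phi(t)w$ with $w\in E_M$ rather than your finite sums $\sum_k d_k(t)w_k$, and it phrases the convergence of the bilinear term via an auxiliary bounded operator $Q$ rather than directly invoking that $w\mapsto\int_0^T a(w(t),v(t))\,dt$ is a bounded linear functional.
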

\begin{proof}
	Lemma \ref{lem:energy_estimate} shows that 
	the approximate solutions $\{z_N:N\in \mathbb{N} \}$ are bounded in $L^2(0,T;H^1_0(\Omega)^n)$ and that 
	their time-derivatives 
	$\left\{\frac{dz_N}{dt}:N\in \mathbb{N} \right\}$ are bounded in $L^2(0,T;H^{-1}(\Omega)^n)$.
	Therefore, by the Banach-Alaoglu thoerem and Problem 7.5.4 of \cite{Evans1998},
	we can extract a subsequence, which is still denoted by $\{z_N:N \in \mathbb{N} \}$, such that 
	the following weak convergences hold:
	\begin{align*}
	z_N &\rightharpoonup z \text{~in~} 
	L^2(0,T;H^1_0(\Omega)^n)\\
	\frac{dz_N}{dt} &\rightharpoonup \frac{dz}{dt}\text{~in~} 
	L^2(0,T;H^{-1}(\Omega)^n).
	\end{align*}
	Fix $N,M \in \mathbb{N}$ with $N \geq M$ and $\phi \in C_0^{\infty}(0,T)$, and take $w \in E_M$.
	Here $C_0^{\infty}(0,T)$ means the space of functions with
	continuous derivatives of all orders and compact support in $(0,T)$.
	By \eqref{eq:app_PDE}, we find that 
	\[
	\left(\frac{dz_N}{dt}(t),\phi(t)w\right)_{L^2} + a(z_N(t),\phi(t)w) = 0\qquad \text{a.e.~} t \in [0,T].
	\]
	Integrating it with respect to $t$, we obtain
	\[
	\int^T_0 \left(\frac{dz_N}{dt}(t),\phi(t)w\right)_{L^2} + a(z_N(t),\phi(t)w)  dt = 0.
	\]
	Since $\frac{dz_N}{dt} \rightharpoonup \frac{dz}{dt}$ in $L^2(0,T;H^{-1}(\Omega)^n)$, 
	it follows that 
	\begin{equation}
	\label{eq:weak_conv_deri}
	\int^T_0  \left\langle \frac{dz_N}{dt}(t),\phi(t)w \right\rangle dt \to 
	\int^T_0  \left\langle \frac{dz}{dt}(t),\phi(t)w \right\rangle  dt.
	\end{equation}
	Define the linear operator $Q$ on $L^2(0,T;H^1_0(\Omega)^n)$ by
	\[
	(Q\zeta)(t) := a(\zeta(t),\phi(t) w) \qquad \forall \zeta \in L^2(0,T;H^1_0(\Omega)^n).
	\]
	Using \eqref{eq:a_bound}, we obtain
	\begin{align*}
	\int^T_0 | 
	a(\zeta(t),\phi(t)w)
	|^2 dt &\leq 
	\int^T_0
	C_1^2 \|\zeta(t)\|_{H^1_0}^2 \cdot |\phi(t)|^2 \cdot \|w\|_{H^1_0}^2 dt \\
	&\hspace{-20pt}\leq
	C_1^2 \max_{0\leq t\leq T}|\phi(t)|^2 \cdot \|w\|_{H^1_0}^2 \cdot  \|\zeta\|_{L^2(0,T;H^1_0)}^2
	\end{align*}
	for all $\zeta \in L^2(0,T;H^1_0(\Omega)^n)$.
	Therefore, $Q$ is a bounded operator from $L^2(0,T;H^1_0(\Omega)^n)$
	to $L^2(0,T)$. Since $z_N \rightharpoonup z$ in $L^2(0,T;H^1_0(\Omega)^n)$,
	it follows that $Qz_N \rightharpoonup Qz$ in $L^2(0,T)$.
	In fact, choose $g \in L^2(0,T)$ arbitrarily and define 
	$h \in L^2(0,T;H^1_0(\Omega)^n)'$ by
	$h(\zeta) := (Q\zeta,g)_{L^2}$. Then 
	\[
	(Qz_N,g)_{L^2} = 
	h(z_N) \to h(z) =  (Qz,g)_{L^2}\qquad (N \to \infty).
	\]
	Thus we have that 
	for every $g \in L^2(0,T)$, 
	\[
	\int^T_0 \big(Q z_N(t) - Qz(t)\big) g(t) dt \to 0.
	\]
	In particular, if we set $g \equiv 1$, then we obtain
	\begin{equation}
	\label{eq:weak_conv}
	\int^T_0 a(z_N(t),\phi(t)w) dt \to  \int^T_0 a(z(t),\phi(t)w) dt.
	\end{equation}
	By \eqref{eq:weak_conv_deri} and \eqref{eq:weak_conv},
	\[
	\int^T_0 \phi(t) 
	\left(
	\left\langle \frac{dz}{dt}(t),v \right\rangle + a(z(t),w) 
	\right) dt = 0.
	\]
	This yields 
	\begin{align}
	&\left\langle \frac{dz}{dt}(t),w \right\rangle + a(z(t),w) = 0 \notag \\
	&\qquad \qquad \text{a.e.~} t \in [0,T],~\!
	\forall w \in E_M,~\!\forall M \in \mathbb{N}.	\label{eq:PDE_weak_case}
	\end{align}
	Since $\bigcup_{M \in \mathbb{N}} E_M$ is dense in $H^1_0(\Omega)^n$,
	\[
	\left\langle \frac{dz}{dt}(t),v \right\rangle + a(z(t),v) = 0  \qquad \text{a.e.~} t \in [0,T],~
	\forall v \in H^1_0(\Omega)^n.
	\]
	
	Let us next show that $z$ satisfies the initial condition $z(0)=z^0$.
	To that purpose, fix $\phi \in C^{\infty} [0,T]$ with $\phi(0)=1$ and $\phi(T) = 0$.
	Let $N,M \in \mathbb{N}$ with $N \geq M$.
	Using the integration by parts formula (see, e.g., Theorem 6.42 in \cite{Hunter2014}),
	we obtain
	\[
	\int^T_0 \!
	\left\langle \frac{dz}{dt}(t),\phi(t)w \right\rangle  dt =
	-( z(0), w )_{L^2}
	-
	\int^T_0 \frac{d\phi}{dt} (t)(z(t),w)_{L^2} dt
	\]
	for all $w \in E_M$.
	Hence, \eqref{eq:PDE_weak_case} yields
	\[
	(z(0), w )_{L^2} = 
	\int^T_0
	\phi(t) a(z(t),w) dt -\int^T_0 \frac{d\phi}{dt}(t) (z(t),w)_{L^2} dt .
	\]
	On the other hand,
	the approximate solution $z_N$ satisfies
	\[
	(z^0, w )_{L^2} = 
	\int^T_0
	\phi(t) a(z_N(t),w) dt -\int^T_0 \frac{d\phi}{dt}(t) (z_N(t),w)_{L^2} dt .
	\]
	Similarly to \eqref{eq:weak_conv_deri} and \eqref{eq:weak_conv}, we obtain
	\begin{align*}
	\int^T_0 \phi(t) a(z_N(t),w) dt &\to \int^T_0
	\phi (t)a(z(t),w) dt \\
	\int^T_0 \frac{d\phi}{dt}(t) (z_N(t),w)_{L^2} dt  &\to \int^T_0 \frac{d\phi}{dt}(t) (z(t),w)_{L^2} dt,
	\end{align*}
	which yields $(z(0), w )_{L^2} =(z^0, w )_{L^2} $ for every $w \in E_M$, $M \in \mathbb{N}$ and hence
	for every $w \in H^1_0(\Omega)^n$. Thus $z(0) = z^0$.
\end{proof} 

Finally, we show the uniqueness of weak solutions.
\begin{theorem}
	Under Assumption~\ref{assum:A},
	a weak solution 
	of \eqref{eq:coupled_PDE} is unique.
\end{theorem}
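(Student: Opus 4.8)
The plan is to run the standard energy method on the difference of two solutions. First I would take two weak solutions $z_1,z_2$ of the PDE \eqref{eq:coupled_PDE} sharing the same initial data $z^0$, and set $w := z_1 - z_2$. Writing the weak formulation of Definition~\ref{defn:weak_solution} compactly as $\langle \frac{dz}{dt}(t),v\rangle + a(z(t),v) = 0$ (the same form used in \eqref{eq:PDE_weak_case}), I would use its linearity in $z$ to conclude that $w \in L^2(0,T;H^1_0(\Omega)^n)$ with $\frac{dw}{dt}\in L^2(0,T;H^{-1}(\Omega)^n)$, that $w(0)=0$, and that $w$ solves the homogeneous identity
\[
\left\langle \frac{dw}{dt}(t),v\right\rangle + a(w(t),v) = 0 \qquad \forall v \in H^1_0(\Omega)^n,\ \text{a.e.~} t \in [0,T].
\]

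Next I would test this identity against the time-dependent choice $v = w(t)$, which is admissible because $w(t)\in H^1_0(\Omega)^n$ for a.e.\ $t$. By (ii) of Theorem~\ref{thm:z_continuity}, the map $t \mapsto \|w(t)\|_{L^2}^2$ is absolutely continuous with derivative $2\langle \frac{dw}{dt}(t),w(t)\rangle$, so the tested identity collapses to the energy equality
\[
\frac{1}{2}\frac{d}{dt}\|w(t)\|_{L^2}^2 + a(w(t),w(t)) = 0 \qquad \text{a.e.~} t \in [0,T].
\]
This is exactly the computation leading to \eqref{eq:alpha_H10} in Lemma~\ref{lem:energy_estimate}, now applied with zero forcing and zero initial data, so I would simply reuse that mechanism.

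Finally, I would invoke the G\r{a}rding-type estimate \eqref{eq:a_coercive}, which yields $a(w(t),w(t)) \geq \alpha\|w(t)\|_{H^1_0}^2 - C_2\|w(t)\|_{L^2}^2 \geq -C_2\|w(t)\|_{L^2}^2$. Discarding the nonnegative $H^1_0$ term gives $\frac{d}{dt}\|w(t)\|_{L^2}^2 \leq 2C_2\|w(t)\|_{L^2}^2$ a.e. Since $t \mapsto \|w(t)\|_{L^2}^2$ is absolutely continuous and $\|w(0)\|_{L^2} = 0$, Gronwall's inequality forces $\|w(t)\|_{L^2} = 0$ for every $t \in [0,T]$, hence $z_1 = z_2$, which is the uniqueness claim.

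The only delicate point, and the one I expect to be the main obstacle, is the passage to the energy equality: the weak formulation holds for each \emph{fixed} test function and a.e.\ $t$, so substituting the $t$-dependent $v = w(t)$ and differentiating $\|w(t)\|_{L^2}^2$ must be justified through the absolute-continuity machinery of Theorem~\ref{thm:z_continuity} rather than by a naive chain rule. Once that identity is secured, the G\r{a}rding inequality and Gronwall's lemma make the remainder routine.
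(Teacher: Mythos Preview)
Your proposal is correct and matches the paper's own proof essentially step for step: form the difference of two weak solutions, test the weak formulation with $v=w(t)$, convert the pairing into $\tfrac{1}{2}\tfrac{d}{dt}\|w(t)\|_{L^2}^2$ via Theorem~\ref{thm:z_continuity}, apply the G\r{a}rding estimate \eqref{eq:a_coercive}, and conclude with Gronwall. Your explicit flagging of the passage to the energy equality is appropriate, and the paper handles it in exactly the way you describe.
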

\begin{proof}
	If $z_1$ and $z_2$ are weak solutions of \eqref{eq:coupled_PDE}, then
	$z = z_1 - z_2$ is also a weak solution of \eqref{eq:coupled_PDE} with $z^0 = 0$.
	It suffices to show that $z \equiv 0$ is the only weak solution of 
	\eqref{eq:coupled_PDE} with $z^0  = 0$.
	
	Let $z$ be a weak solution of \eqref{eq:coupled_PDE} with $z^0  = 0$.
	By the condition~1) in Definition~\ref{defn:weak_solution} with $v = z(t)$, we obtain
	\[
	\left\langle
	\frac{dz}{dt}(t),z(t)
	\right\rangle + a\big(z(t),z(t)\big) = 0\qquad \text{a.e.~} t \in [0,T].
	\]
	By Theorem~\ref{thm:z_continuity} and \eqref{eq:a_coercive}, 
	\[
	\frac{d}{dt} \|z(t)\|_{L^2}^2 \leq 2C_2 \|z(t)\|_{L^2}^2 \qquad \text{a.e.~} t \in [0,T].
	\]
	Gronwall's inequality shows that 
	\[
	\|z(t)\|_{L^2}^2  \leq e^{2C_2 t}  \|z^0 \|_{L^2}^2 =0\qquad \forall t \in [0,T].
	\]
	Thus $z \equiv 0$.
\end{proof}
	
	% Can use something like this to put references on a page
	% by themselves when using endfloat and the captionsoff option.
	\ifCLASSOPTIONcaptionsoff
	\newpage
	\fi

% Generated by IEEEtran.bst, version: 1.12 (2007/01/11)

\end{document}